\documentclass[preprint,12pt,nopreprintline]{elsarticle}

\usepackage{amsmath}
\usepackage{amsfonts}
\usepackage{amsthm}
\usepackage{lipsum}
\usepackage{mathtools}
\PassOptionsToPackage{cm}{fullpage}
\usepackage{mathrsfs}
\usepackage{hyperref}   
\usepackage{cleveref} 
\usepackage[cm]{fullpage}
\newtheorem{theorem}{Theorem}
\usepackage{xcolor} 
\usepackage{hyperref}

\hypersetup{
    colorlinks=true,       
    linkcolor=blue,       
    citecolor=blue,        
    urlcolor=blue,         
    allcolors=blue         
}

\makeatletter
\def\ps@pprintTitle{%
     \let\@oddhead\@empty
     \let\@evenhead\@empty
     \let\@oddfoot\@empty
     \let\@evenfoot\@oddfoot
}
\makeatother

\journal{Nuclear Physics B}

\begin{document}

\begin{frontmatter}

\title{On the generalized inverse tangent integral and Catalan’s constant}

\author[inst1]{Petr Vlachopulos}

\affiliation[inst1]{organization={Department of Mathematics and Statistics, Masaryk University, Faculty of Science},
            addressline={Kotlarska 2}, 
            city={Brno},
            postcode={611 37},
            country={Czech Republic}}

\begin{abstract}
In this paper, we develop new identities for the inverse tangent integral by connecting it to the dilogarithmic (polylogarithmic) structure and to a carefully designed auxiliary arctangent integral $Ti_2(a)$ with a tunable endpoint.

The core idea is based on the introduction of an auxiliary integral depending on two parameters and analyzing it via a generating-function perspective. This converts the integral into an explicit formula, yielding a compact representation in terms of the real part of a dilogarithmic expression plus a companion dilogarithm contribution. In parallel, the inverse tangent integral is rewritten through standard integral transformations into a form governed by the imaginary part of a dilogarithm evaluated at a complex argument, producing a clean polylogarithmic description.

Overall, we establish a coherent bridge between arctangent-type integrals, identities connected to Catalan’s constant, and the systematic generation of new integral representations and decompositions.

\end{abstract}

\begin{keyword}

Inverse Tangent Integral \sep Generalized Arctangent Integrals \sep Catalan’s Constant \sep Dilogarithms \sep Arctangent Expansion
\end{keyword}

\end{frontmatter}

\label{sec:sample1}

\vspace{0.5cm}

\newcommand\restr[2]{{
  \left.\kern-\nulldelimiterspace 
  \littletaller 
  \right|_{#2} 
  }}

\newpage

\section{Introduction}

The inverse tangent integral
\begin{equation}
\operatorname{Ti}_2\left(y\right):=\int_0^y\frac{\operatorname{arctan}\left(x\right)}{x}\operatorname{dx}
\end{equation}
occupies a natural place at the intersection of special functions, polylogarithms, and the realm of classical constants. In particular, its evaluation for \(x=1\) gives the famous Catalan’s constant
\begin{equation}
G=\operatorname{Ti}_2\left(1\right),
\end{equation}
which can be considered one of the most prominent constants arising in analytic number theory and in the analysis of arctangent-type series (often infinite) and integrals. Precisely because of this connection, identities involving $\operatorname{Ti}_2$ are often of great interest, as they usually reveal deeper structural links between elementary trigonometric functions, higher transcendental functions, and many other subfields of analytic number theory \cite{lewin1958dilogarithms,campbell2022special}. From a historical perspective, it is almost obligatory to mention Ramanujan’s famous paper on the inverse tangent integral \cite{ramanujan1915integral}. This is the classical paper on the integral itself and related identities. Additional and more detailed work of Ramanujan on $\operatorname{Ti}_2$ can also be found in his first profound notebook \cite{berndt1985ramanujan}. 

In this work, we focus on the systematic development of similar links, as mentioned above, by showing how generalized inverse tangent integrals can be expressed through dilogarithmic, Fourier-type, and Hurwitz zeta-functional constructions.

A central theme of the paper is that arctangent integrals admit a richer analytic structure than is apparent from their original integral definitions. The starting point consists of the classical polylogarithmic identity relating $\operatorname{Ti}_2$ to the imaginary part of the dilogarithm, which already suggests that inverse tangent integrals should be accessible through complex-analytic methods, as one can see in \cite{maximon2003dilogarithm,wood1992computation}. Building on this idea, the paper introduces an auxiliary two-parameter integral with a tunable endpoint and studies it through differentiation with respect to the parameter and expansion to an explicit compact formula. As a result, the inverse tangent integral is placed into a broader framework in which trigonometric integrals, polylogarithms, and endpoint-dependent decompositions interact in a transparent way.

Within the first main result, we establish a representation of 
$\operatorname{Ti}_2\left(a\right)$ through the auxiliary integral $I\left(a,b\right)$, where the endpoint $b=b\left(a\right)$ is determined by a natural admissibility condition for $a$, which is a strictly positive number. This produces a nontrivial decomposition in which the inverse tangent integral is balanced by elementary logarithmic terms and real and imaginary parts of suitable combinations of dilogarithms. Moreover, by setting $a=1$, we recover the famous Catalan’s constant $G$ by obtaining the associated quantity $b$ from the auxiliary integral $I\left(1,b\right)$.

The significance of this construction lies not only in the final identity itself, but also in the method. In other words, the passage from an arctangent integral to a dilogarithmic representation is achieved through a carefully chosen generating-function mechanism, which may be viewed as analogous to other situations in special-function theory where orthogonal-polynomial expansions serve as a bridge between integral transforms and closed-form evaluations, as shown in \cite{ismail2002q,szego1975colloquium}. 

The second result develops a decomposition formula for the so-called inverse tangent integral
\begin{equation}
\int_0^A\frac{1}{x}\operatorname{arctan}\left(\frac{x}{\alpha}\right)\operatorname{dx},
\end{equation}
where  $x\in\mathbb{R}\setminus\{0\}$, $\alpha>0$, $\alpha\in\mathbb{R}\setminus\{k\pi\}$ and $A>0$. 

\newpage
We obtain such a decomposition formula via Euler’s product formula for the sine function and the corresponding partial fraction expansion of the cotangent. This leads to an identity in which the original inverse tangent integral can be decomposed into a hyperbolic inverse tangent-type contribution and an infinite family of correction terms. The underlying philosophy is reminiscent of spectral or mode expansions in a way that a single integral quantity is decomposed into a principal term together with an infinite series of structured contributions indexed by the poles of the cotangent \cite{cvijovic2009summation}. This type of representation is particularly effective, especially when the value of the endpoint is restricted to $A=1$. Here, Catalan’s constant emerges, and the correction terms can be further rewritten in terms of inverse tangent integrals, dilogarithmic values, and eventually Hurwitz zeta functions.

In this way, the paper contains more than isolated formulas, it demonstrates a coherent analytic pathway from simple trigonometric identities to deeper expansions into special functions. Such synthesis is one of the main conceptual strengths of the following results, as it places Catalan’s constant into a broader realm of identities that are governed by dilogarithmic and zeta-functional phenomena.

\section{Main results}

\begin{theorem}{\textbf{(Inverse tangent integral with tunable endpoint)}}\label{thm.1}
\textit{Assume that $a>0$ is a positive real number, $b=
b\left(a\right)\in\left(0,\pi\right)$ and $\operatorname{Li}_2$ is analytically continued dilogarithm on the principal branch. Let}
\begin{equation}
\nonumber
\psi\left(a\right):=\mathscr{Im}\left(\operatorname{Li}_2\left(1+ia\right)\right), \hspace{0,2cm} \phi_a\left(b\right):=I\left(a,b\right)-\frac{\pi b}{2}+\frac{b^2}{2},
\end{equation}
\textit{where}
\begin{equation}
\nonumber
I\left(a,b\right):=\int_0^b\operatorname{arctan}\left(\frac{a+\operatorname{cos}\left(\beta\right)}{\operatorname{sin}\left(\beta\right)}\right)\operatorname{d\beta}.
\end{equation}
\textit{Next, let us define the admissible set $\mathscr{A}:=\{a>0\hspace{0.1cm}\vert\hspace{0.1cm}0<\psi\left(a\right)<\phi_a\left(\pi\right)\}\neq\emptyset$. If $a\in\mathscr{A}$, then there exists a unique $b\in\left(0,\pi\right)$ such that}
\begin{equation}\label{eq:1}
\begin{aligned}
\operatorname{Ti_2}\left(a\right):=\int_0^a\frac{1}{x}\operatorname{arctan}\left(x\right)\operatorname{dx}=&\operatorname{arctan}\left(a\right)\operatorname{ln}\left(a\right)+I\left(a,b\right)-\frac{\pi b}{2}+\frac{b^2}{2}-\frac{\pi}{4}\operatorname{ln}\left(a^2+1\right).
\end{aligned}
\end{equation}
\end{theorem}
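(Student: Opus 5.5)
The plan is to reduce \eqref{eq:1} to the scalar equation $\phi_a(b)=\psi(a)$ in the unknown $b$. We then solve this equation by the intermediate value theorem and obtain uniqueness from strict monotonicity of $\phi_a$. The first step is an integration by parts in the definition of $\operatorname{Ti}_2$:
\begin{equation}
\nonumber
\operatorname{Ti}_2(a)=\operatorname{arctan}(a)\operatorname{ln}(a)-\int_0^a\frac{\operatorname{ln}(x)}{1+x^2}\operatorname{dx}.
\end{equation}
The boundary term at $0$ vanishes because $\operatorname{arctan}(x)\operatorname{ln}(x)\to 0$ as $x\to 0^+$.

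Next we identify the remaining integral with $\psi$. Consider $t\mapsto \operatorname{Li}_2(1+it)$ for $t\in[0,a]$. This path lies off the branch cut $(1,\infty)$ except at $t=0$, where $\operatorname{Li}_2$ is still continuous with $\operatorname{Li}_2(1)=\pi^2/6$ real. Using $\frac{d}{dz}\operatorname{Li}_2(z)=-\operatorname{ln}(1-z)/z$ and $\operatorname{ln}(-it)=\operatorname{ln}(t)-i\pi/2$ on the principal branch, a direct computation gives
\begin{equation}
\nonumber
\frac{d}{dt}\,\mathscr{Im}\,\operatorname{Li}_2(1+it)=\frac{-\operatorname{ln}(t)+\frac{\pi t}{2}}{1+t^2}.
\end{equation}
Integrating from $0$ to $a$ yields
\begin{equation}
\nonumber
\psi(a)=-\int_0^a\frac{\operatorname{ln}(x)}{1+x^2}\operatorname{dx}+\frac{\pi}{4}\operatorname{ln}(1+a^2).
\end{equation}
Here the singularity of the derivative at $t=0$ is integrable, which justifies the fundamental theorem of calculus. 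Combining this with the first step gives $\operatorname{Ti}_2(a)=\operatorname{arctan}(a)\operatorname{ln}(a)+\psi(a)-\frac{\pi}{4}\operatorname{ln}(a^2+1)$. Hence \eqref{eq:1} holds if and only if $\phi_a(b)=\psi(a)$.

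For existence, note that the integrand of $I(a,b)$ is continuous on $(0,\pi)$ and bounded by $\pi/2$. Therefore $b\mapsto\phi_a(b)$ is continuous on $[0,\pi]$ with $\phi_a(0)=0$. Since $a\in\mathscr{A}$, we have $0<\psi(a)<\phi_a(\pi)$, so the intermediate value theorem gives some $b\in(0,\pi)$ with $\phi_a(b)=\psi(a)$.

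For uniqueness, differentiate:
\begin{equation}
\nonumber
\phi_a'(b)=\operatorname{arctan}\left(\frac{a+\operatorname{cos}(b)}{\operatorname{sin}(b)}\right)-\frac{\pi}{2}+b, \qquad b\in(0,\pi).
\end{equation}
Since $\operatorname{sin}(b)>0$ and $a>0$, we have $\frac{a+\operatorname{cos}(b)}{\operatorname{sin}(b)}>\operatorname{cot}(b)$. Because $\operatorname{arctan}$ is strictly increasing and $\operatorname{arctan}(\operatorname{cot}(b))=\frac{\pi}{2}-b$ on $(0,\pi)$, it follows that $\phi_a'(b)>0$. So $\phi_a$ is strictly increasing, and the solution $b$ is unique.

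The only delicate point is the branch bookkeeping in the second step: using the principal logarithm for $\operatorname{ln}(-it)$, and justifying continuity of $\operatorname{Li}_2$ at the endpoint $z=1$ of the cut. Everything else is elementary. The non-emptiness of $\mathscr{A}$ is part of the hypothesis as stated. If a proof of it is wanted, it can be checked at a convenient value such as $a=1$, where $\psi(1)=G-\frac{\pi}{4}\operatorname{ln}(2)>0$ by the identity above, and $\phi_1(\pi)$ is computed from $\operatorname{arctan}(\operatorname{cot}(\beta/2))=\frac{\pi}{2}-\frac{\beta}{2}$.
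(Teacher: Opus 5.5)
Your proof is correct, and it is more economical than the paper's. The paper proves the identity $\operatorname{Ti}_2(a)=\arctan(a)\ln(a)+\psi(a)-\frac{\pi}{4}\ln(1+a^2)$ in its Proposition 1 by differentiating the right-hand side and letting $a\to0^+$. Your integration by parts, followed by integrating $\frac{d}{dt}\mathscr{Im}\operatorname{Li}_2(1+it)$, is the same computation read in the other direction.

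The real difference lies in how you handle $\phi_a$. The paper first evaluates $I(a,b)$ in closed form, $I(a,b)=\frac{\pi b}{2}-\frac{b^2}{2}-\operatorname{Li}_2(-a)+\mathscr{R}\operatorname{Li}_2(-ae^{ib})$, by differentiating in $a$ and matching at $a=0$. From this it derives $\phi_a'(b)=\operatorname{Arg}(1+ae^{ib})>0$ and $\phi_a(\pi)=\mathscr{R}\operatorname{Li}_2(a)-\operatorname{Li}_2(-a)$; for $a>1$ the latter is a limit onto the branch cut. You instead differentiate the defining integral directly and use the elementary inequality $\frac{a+\cos b}{\sin b}>\cot b$ together with $\arctan(\cot b)=\frac{\pi}{2}-b$. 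You also get continuity of $\phi_a$ on $[0,\pi]$ simply from the bound $\pi/2$ on the integrand.

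Since $\mathscr{A}$ is defined through $\phi_a(\pi)$ itself, the closed form is not needed for the theorem as stated. Your route therefore avoids all branch bookkeeping for $\operatorname{Li}_2(-ae^{ib})$. What the paper's route buys is an explicit dilogarithmic expression for $\phi_a(b)$ and for the threshold $\phi_a(\pi)$, which makes membership $a\in\mathscr{A}$ checkable in terms of special values.

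One correction to your closing remark. Your identity at $a=1$ gives $G=\psi(1)-\frac{\pi}{4}\ln 2$, so $\psi(1)=G+\frac{\pi}{4}\ln 2$, not $G-\frac{\pi}{4}\ln 2$. Positivity is then immediate, but you still need the upper bound $\psi(1)<\phi_1(\pi)=\frac{\pi^2}{4}$. This follows from $G<\sum_{n\ge0}(2n+1)^{-2}=\frac{\pi^2}{8}$ and $\frac{\pi}{4}\ln 2<\frac{\pi^2}{8}$, which is exactly how the paper verifies $a=1\in\mathscr{A}$ in Corollary 1. The statement itself asserts $\mathscr{A}\neq\emptyset$, so this check belongs in the proof rather than being left as optional.
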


\vspace{0.2cm}
\newtheorem{corollary}{Corollary}
\begin{corollary}{\textbf{(Evaluation of the Catalan’s constant)}}\label{cor.1}
\textit{Let $a>0$, $0<b<\pi$ and $I\left(a,b\right)$, $\phi_a\left(b\right)$, $\psi\left(a\right)$ be given as in Theorem \ref{thm.1}. Then for $a=1$, we have}
\begin{equation}
\nonumber
I\left(1,b\right)=\frac{\pi b}{2}-\frac{b^2}{4},\hspace{0.2cm}\phi_1\left(b\right)=\frac{b^2}{4}
\end{equation}
\textit{with}
\begin{equation}
\nonumber
0<\psi\left(1\right)<\phi_1\left(\pi\right)=\frac{\pi^2}{4}.
\end{equation}
\textit{Moreover, there exists a unique $b\left(1\right)\in\left(0,\pi\right)$, namely}
\begin{equation}
\nonumber
b\left(1\right)=2\sqrt{\psi\left(1\right)}=\sqrt{4G+\pi\operatorname{ln}\left(2\right)}
\end{equation}
\textit{such that $\phi_1\left(b\left(1\right)\right)=\psi\left(1\right)$. Consequently, we obtain an evaluation of Catalan’s constant}
\begin{equation}
G=\frac{b^2\left(1\right)}{4}-\frac{\pi}{4}\operatorname{ln}\left(2\right).
\end{equation}
\end{corollary}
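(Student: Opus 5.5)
The plan is to specialize Theorem \ref{thm.1} to $a=1$, where the integrand of $I(1,b)$ collapses to an elementary linear function, so that everything becomes explicit. First I compute $I(1,b)$ directly from the half-angle identities
\begin{equation}
\nonumber
1+\cos\beta=2\cos^2(\beta/2),\qquad \sin\beta=2\sin(\beta/2)\cos(\beta/2).
\end{equation}
For $\beta\in(0,\pi)$ these give
\begin{equation}
\nonumber
\frac{1+\cos\beta}{\sin\beta}=\cot(\beta/2)=\tan\left(\frac{\pi}{2}-\frac{\beta}{2}\right).
\end{equation}
Since $\frac{\pi}{2}-\frac{\beta}{2}\in(0,\pi/2)$ lies in the principal range of $\arctan$, we get $\arctan\bigl((1+\cos\beta)/\sin\beta\bigr)=\frac{\pi}{2}-\frac{\beta}{2}$. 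Integrating over $[0,b]$ yields $I(1,b)=\frac{\pi b}{2}-\frac{b^2}{4}$. Substituting this into the definition of $\phi_a$ gives $\phi_1(b)=\frac{b^2}{4}$, and in particular $\phi_1(\pi)=\frac{\pi^2}{4}$.

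Next I evaluate $\psi(1)=\mathscr{Im}\,\operatorname{Li}_2(1+i)$ via the classical identity $\mathscr{Im}\,\operatorname{Li}_2(1+i)=G+\frac{\pi}{4}\ln 2$. To derive it, I would either use the standard relation between $\operatorname{Ti}_2$ and $\mathscr{Im}\operatorname{Li}_2$ mentioned in the introduction, or apply the reflection formula
\begin{equation}
\nonumber
\operatorname{Li}_2(z)+\operatorname{Li}_2(1-z)=\frac{\pi^2}{6}-\ln z\,\ln(1-z)
\end{equation}
with $z=1+i$. In that case $1-z=-i$, and $\operatorname{Li}_2(-i)=-\frac{\pi^2}{48}-iG$. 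Taking imaginary parts, with $\ln(1+i)=\frac12\ln2+\frac{i\pi}{4}$ and $\ln(-i)=-\frac{i\pi}{2}$, the product term contributes $\frac{\pi}{4}\ln 2$ to the imaginary part. This gives $\psi(1)=G+\frac{\pi}{4}\ln2$. Tracking the principal branches and the resulting sign conventions in this step is the only place I expect care to be needed.

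With this value the admissibility check is numerical. We have $\psi(1)>0$ because $G>0$. For the upper bound, $G<1$ and $\ln 2<1$ give
\begin{equation}
\nonumber
\psi(1)<1+\frac{\pi}{4}<\frac{\pi^2}{4}=\phi_1(\pi),
\end{equation}
so $1\in\mathscr{A}$. Since $\phi_1(b)=b^2/4$ is continuous and strictly increasing on $(0,\pi)$, from $0$ to $\pi^2/4$, the equation $\phi_1(b)=\psi(1)$ has exactly one solution. That solution is $b(1)=2\sqrt{\psi(1)}=\sqrt{4G+\pi\ln 2}$.

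Finally, I apply \eqref{eq:1} at $a=1$. There $\arctan(1)\ln(1)=0$, and the $b$-dependent part equals $\phi_1(b)=b^2/4$. This gives
\begin{equation}
\nonumber
G=\operatorname{Ti}_2(1)=\frac{b^2(1)}{4}-\frac{\pi}{4}\ln 2.
\end{equation}
This agrees with the value just found for $b(1)$, so it also serves as a consistency check. Indeed, the $b$ in Theorem \ref{thm.1} is characterized by $\phi_a(b)=\psi(a)$, because $\operatorname{Ti}_2(a)=\arctan(a)\ln a+\psi(a)-\frac{\pi}{4}\ln(1+a^2)$.
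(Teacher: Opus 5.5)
Your proposal is correct. It shares the paper's skeleton: the half-angle collapse $\arctan(\cot(\beta/2))=\frac{\pi}{2}-\frac{\beta}{2}$ gives $I(1,b)=\frac{\pi b}{2}-\frac{b^2}{4}$ and $\phi_1(b)=\frac{b^2}{4}$, then an admissibility check, then explicit inversion of $b^2/4=\psi(1)$.

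The one genuinely different step is the evaluation $\psi(1)=G+\frac{\pi}{4}\ln 2$.
\begin{itemize}
\item The paper specializes Proposition~\ref{my.prop}, $\operatorname{Ti}_2(a)=\arctan(a)\ln a+\psi(a)-\frac{\pi}{4}\ln(1+a^2)$, to $a=1$.
\item You use Euler's reflection formula at $z=1+i$, $1-z=-i$, together with $\operatorname{Li}_2(-i)=-\frac{\pi^2}{48}-iG$.
\end{itemize}
Your branch concern is harmless. Both $1+i$ and $-i$ avoid $(-\infty,0]\cup[1,\infty)$, which is where the principal-branch reflection identity holds, and your imaginary-part bookkeeping is right.

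The trade-off between the two routes is this. Yours makes the corollary independent of Proposition~\ref{my.prop} and cross-checks it at $a=1$. The paper's is a one-line consequence of a result it has already proved.

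Your first alternative is not enough on its own. The relation $\operatorname{Ti}_2(x)=\mathscr{Im}\operatorname{Li}_2(ix)$ from the preliminaries only gives $\mathscr{Im}\operatorname{Li}_2(\pm i)=\pm G$. It is the reflection formula, or Proposition~\ref{my.prop}, that actually carries the evaluation over to $1+i$.

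Two smaller differences are also fine. Your cruder bounds $G<1$ and $\ln 2<1$, giving $\psi(1)<1+\frac{\pi}{4}<\frac{\pi^2}{4}$, work just as well as the paper's $G<\frac{\pi^2}{8}$ and $\ln 2<\frac{\pi}{2}$. Reading uniqueness directly off the monotonicity of $b^2/4$ replaces the appeal to Lemma~\ref{lem.2}.
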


\vspace{0.2cm}
\begin{theorem}{\textbf{(Generalized inverse tangent integral)}}\label{thm.2}
\textit{Let $x\in\mathbb{R}\setminus\{0\}$, $\alpha>0$, $\alpha\in\mathbb{R}\setminus\{k\pi\}$, where $k\in\mathbb{Z}$ and $A>0$ is a positive real number, then}
\begin{equation}\label{eq:2}
\begin{aligned}
\operatorname{Ti}_2\left(\frac{A}{\alpha}\right)&:=\int_0^A\frac{1}{x}\operatorname{arctan}\left(\frac{x}{\alpha}\right)
\\&=\int_0^A\frac{1}{x}\operatorname{arctan}\left(\operatorname{cotg}\left(\alpha\right)\operatorname{tanh}\left(x\right)\right)+\sum_{k=1}^{\infty}\int_0^A\frac{1}{x}\operatorname{arctan}\left(\frac{2\alpha x}{x^2+\left(k\pi\right)^2-\alpha^2}\right).
\end{aligned}
\end{equation}
\end{theorem}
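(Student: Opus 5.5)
The plan is to prove the pointwise identity
\begin{equation}
\nonumber
\operatorname{arctan}\left(\frac{x}{\alpha}\right)=\operatorname{arctan}\left(\operatorname{cotg}\left(\alpha\right)\operatorname{tanh}\left(x\right)\right)+\sum_{k=1}^{\infty}\operatorname{arctan}\left(\frac{2\alpha x}{x^2+\left(k\pi\right)^2-\alpha^2}\right)
\end{equation}
for $x>0$. Dividing by $x$ and integrating over $(0,A)$ then gives \eqref{eq:2}, once we justify exchanging sum and integral. The identity comes from taking arguments in Euler's product $\sin z=z\prod_{k\ge1}\left(1-z^2/(k\pi)^2\right)$ at $z=\alpha+ix$. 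The ingredients are:
\begin{itemize}
\item $\sin(\alpha+ix)=\sin\alpha\cosh x+i\cos\alpha\sinh x$;
\item $\alpha+ix$ has argument $\operatorname{arctan}(x/\alpha)$;
\item $(k\pi)^2-(\alpha+ix)^2=\left(x^2+(k\pi)^2-\alpha^2\right)-2i\alpha x$.
\end{itemize}
Taking arguments therefore gives the identity modulo $\pi$. The actual work is fixing the branches.

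To avoid branch ambiguity, I would work with derivatives rather than arguments. Set $F(x)=\operatorname{Im}\log\sin(\alpha+ix)-\operatorname{Im}\log\sin\alpha$, defined by continuation from $x=0$. This is legitimate since $\sin(\alpha+ix)\neq0$ for $x\ge0$, because $\alpha\notin\pi\mathbb{Z}$. Then $F'(x)=\operatorname{Re}\operatorname{cotg}(\alpha+ix)$. Now insert the partial fraction expansion
\begin{equation}
\nonumber
\operatorname{cotg} z=\frac1z+\sum_{k\ge1}\frac{2z}{z^2-(k\pi)^2}.
\end{equation}
It converges locally uniformly, so it may be integrated termwise from $0$ to $x$. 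Each term integrates to the continuous argument, vanishing at $x=0$, of the corresponding factor.

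Next I identify these continuous arguments with arctangents.
\begin{itemize}
\item The real part of $\sin\alpha\cosh x+i\cos\alpha\sinh x$ keeps the sign of $\sin\alpha$ and never vanishes. Hence $F(x)=\operatorname{arctan}(\operatorname{cotg}\alpha\tanh x)$ exactly, whatever the sign of $\sin\alpha$.
\item Likewise the argument of $\alpha+ix$ equals $\operatorname{arctan}(x/\alpha)$, since $\alpha>0$.
\item For the factor $1-z^2/(k\pi)^2$ with $k\pi>\alpha$, the real part $x^2+(k\pi)^2-\alpha^2$ stays positive. Its continuous argument is therefore $-\operatorname{arctan}\left(2\alpha x/(x^2+(k\pi)^2-\alpha^2)\right)$.
\end{itemize}
Rearranging gives the pointwise identity.

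The main obstacle is the finitely many indices with $k\pi<\alpha$. There the real part $x^2+(k\pi)^2-\alpha^2$ changes sign at $x=\sqrt{\alpha^2-k^2\pi^2}$, so the continuous argument differs from the principal arctangent by a jump of $\pi$ for larger $x$. I would first try to read $\operatorname{arctan}$ in these finitely many terms as the continuous branch, i.e.\ the argument of $x^2+(k\pi)^2-\alpha^2+2i\alpha x$, which lies in $(0,\pi)$. With that convention the identity holds for all admissible $\alpha$. Alternatively, I would state the result cleanly for $0<\alpha<\pi$, where no correction arises, and record the $\pi$-corrections (each contributing $\pi\ln\left(A/\sqrt{\alpha^2-k^2\pi^2}\right)$ when $A$ exceeds the crossing point) for larger $\alpha$. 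This is the step I expect needs the most care to match the statement as written.

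Finally, I justify the interchange of sum and integral. For $k\pi>\sqrt{\alpha^2+1}$ and $0<x\le A$,
\begin{equation}
\nonumber
\frac1x\left|\operatorname{arctan}\left(\frac{2\alpha x}{x^2+(k\pi)^2-\alpha^2}\right)\right|\le\frac{2\alpha}{(k\pi)^2-\alpha^2}.
\end{equation}
The right-hand side is summable in $k$, so dominated (or uniform) convergence permits termwise integration on $(0,A)$. Each integrand extends continuously to $x=0$, so all integrals are finite and \eqref{eq:2} follows.
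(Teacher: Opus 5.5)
Your argument follows the paper's proof. Both take the continuous argument of $\sin(\alpha+ix)/\sin\alpha$, differentiate it to $\operatorname{Re}\operatorname{cotg}(\alpha+ix)$, insert the cotangent partial fractions, and integrate termwise. Each term is identified with the argument of the corresponding Euler factor, and the pointwise identity is then divided by $x$ and integrated. Your signs are actually cleaner: the paper's derivative formula for $\Xi_k$ carries the wrong sign, and this is silently compensated when it is substituted back.

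Your concern about $\alpha>\pi$ is correct, and it exposes a genuine flaw in the statement as written. The paper simply asserts that $\Xi_k(x)=\operatorname{arctan}\bigl(2\alpha x/(x^2+(k\pi)^2-\alpha^2)\bigr)$ is continuous on $[0,\infty)$. This fails whenever $k\pi<\alpha$, because the denominator vanishes at $x_k=\sqrt{\alpha^2-k^2\pi^2}$. With the principal $\operatorname{arctan}$ the pointwise identity is then false. Take $\alpha\in(m\pi,(m+1)\pi)$ with $m\ge 1$ and let $x\to\infty$. The left side tends to $\pi/2$. On the right, $\operatorname{arctan}(\operatorname{cotg}\alpha)=\pi/2-\alpha+m\pi$ and $\sum_k\Xi_k(x)\to\alpha$, so the right side tends to $\pi/2+m\pi$.

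Hence the theorem holds only for $0<\alpha<\pi$, or for $\alpha>\pi$ with $A\le x_m$. This does cover every later use in the paper. Your second option is the correct general statement: restrict to $0<\alpha<\pi$, and otherwise subtract $\pi\ln(A/x_k)$ for each $k$ with $k\pi<\alpha$ and $x_k<A$.

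Your first option, however, is off by $\pi$. The quantity $\operatorname{Arg}\bigl(x^2+(k\pi)^2-\alpha^2+2i\alpha x\bigr)\in(0,\pi)$ tends to $\pi$ as $x\to0^+$, so it is not the branch vanishing at $0$. The continuous branch you need is that argument minus $\pi$, with values in $(-\pi,0)$. Under your convention the identity would be off by $\pi$ for every $x>0$. Moreover, $\int_0^A\Xi_k(x)\,dx/x$ would diverge logarithmically at $0$.
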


\vspace{0.2cm}
\begin{corollary}\label{cor.2}
\textit{Let $0<\alpha<\pi$ and denote $H\left(A,\alpha\right):=\int_0^A\frac{1}{x}\operatorname{arctan}\left(\operatorname{cotg}\left(\alpha\right)\operatorname{tanh}\left(x\right)\right)\operatorname{dx}$. Then}
\begin{equation}\label{eq.7}
\operatorname{Ti}_2\left(\frac{A}{\alpha}\right)=H\left(A,\alpha\right)+\sum_{k=1}^{\infty}\Bigg[\operatorname{Ti}_2\left(\frac{A}{k\pi-\alpha}\right)-\operatorname{Ti}_2\left(\frac{A}{k\pi+\alpha}\right) \Bigg].
\end{equation}
\end{corollary}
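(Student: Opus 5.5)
We start from Theorem \ref{thm.2} with $0<\alpha<\pi$, so the first term on the right of \eqref{eq:2} is exactly $H(A,\alpha)$. Everything then comes down to rewriting each summand
\begin{equation}
\nonumber
J_k:=\int_0^A\frac{1}{x}\operatorname{arctan}\left(\frac{2\alpha x}{x^2+(k\pi)^2-\alpha^2}\right)\operatorname{dx}
\end{equation}
as a difference of two inverse tangent integrals.

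\textbf{Splitting the arctangent.} We use the addition formula $\operatorname{arctan}(u)-\operatorname{arctan}(v)=\operatorname{arctan}\frac{u-v}{1+uv}$, which holds without a $\pi$-correction whenever $uv>-1$. Take $u=\frac{x}{k\pi-\alpha}$ and $v=\frac{x}{k\pi+\alpha}$. Then
\begin{equation}
\nonumber
u-v=\frac{2\alpha x}{(k\pi)^2-\alpha^2},\qquad 1+uv=\frac{x^2+(k\pi)^2-\alpha^2}{(k\pi)^2-\alpha^2}.
\end{equation}
Hence $\frac{u-v}{1+uv}=\frac{2\alpha x}{x^2+(k\pi)^2-\alpha^2}$.

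The hypothesis $0<\alpha<\pi$ gives $k\pi-\alpha>0$ for every $k\ge1$. Therefore $u,v\ge0$ for $x\ge0$, so $uv\ge0>-1$ and the formula holds with no branch issue. This is exactly where the restriction $\alpha<\pi$ is needed: for larger $\alpha$ the $k=1$ term would change sign and pick up $\pm\pi$ jumps.

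\textbf{Identifying the pieces.} The substitution $x=ct$ gives
\begin{equation}
\nonumber
\int_0^A\frac{1}{x}\operatorname{arctan}\left(\frac{x}{c}\right)\operatorname{dx}=\operatorname{Ti}_2\left(\frac{A}{c}\right)\quad\text{for } c>0.
\end{equation}
Integrating the splitting termwise, which is legitimate since both integrands are bounded near $x=0$ by $1/c$, yields
\begin{equation}
\nonumber
J_k=\operatorname{Ti}_2\left(\frac{A}{k\pi-\alpha}\right)-\operatorname{Ti}_2\left(\frac{A}{k\pi+\alpha}\right).
\end{equation}
Summing over $k$ and inserting into \eqref{eq:2} gives \eqref{eq.7}.

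\textbf{Main obstacle.} The only point needing care is convergence of the series. Theorem \ref{thm.2} already asserts convergence, so formally nothing more is required. For safety we can check it directly. Since $\operatorname{Ti}_2(y)=y+O(y^3)$, each bracket is
\begin{equation}
\nonumber
A\left(\frac{1}{k\pi-\alpha}-\frac{1}{k\pi+\alpha}\right)+O(k^{-3})=O(k^{-2}),
\end{equation}
so the series converges absolutely.
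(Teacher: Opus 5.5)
Your proposal is correct and follows essentially the same route as the paper. Both proofs split each summand of Theorem~\ref{thm.2} with the arctangent subtraction formula for $u=x/(k\pi-\alpha)$ and $v=x/(k\pi+\alpha)$, then rescale $x=ct$ to obtain the two $\operatorname{Ti}_2$ values. The paper justifies the principal branch by noting $0\le u_k-v_k<\pi/2$ where you use $uv>-1$, and your write-up keeps the correct combined argument $\frac{2\alpha x}{x^2+(k\pi)^2-\alpha^2}$ and the $1/x$ factors, both of which the paper's text garbles.
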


\vspace{0.2cm}
\begin{corollary}\label{cor.3}\textbf{(Catalan’s family)}
\textit{Let $n\geq 2$ and put $A=\alpha=\frac{\pi}{n}$ in Theorem \ref{thm.2}. Then}
\begin{equation}
G=\int_0^{\frac{\pi}{n}}\frac{1}{x}\operatorname{arctan}\left(\operatorname{cotg}\left(\frac{\pi}{n}\right)\operatorname{tanh}\left(x\right)\right)
\operatorname{dx}+\sum_{k=1}^{\infty}\Bigg[\operatorname{Ti}_2\left(\frac{1}{nk-1}\right)-\operatorname{Ti}_2\left(\frac{1}{1+nk}\right) \Bigg]
\end{equation}  
\end{corollary}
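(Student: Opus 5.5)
The plan is to specialize Corollary \ref{cor.2} (equivalently Theorem \ref{thm.2}) at $A=\alpha=\frac{\pi}{n}$ and simplify each term. First I will check that this choice is admissible. For $n\geq 2$ we have $0<\frac{\pi}{n}\leq\frac{\pi}{2}<\pi$, so $\alpha$ is positive, is not an integer multiple of $\pi$, and lies in $(0,\pi)$ as Corollary \ref{cor.2} requires. Also $A=\frac{\pi}{n}>0$. Hence \eqref{eq.7} holds with these values.

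Next I will evaluate the individual pieces of \eqref{eq.7}.
\begin{itemize}
\item The left-hand side becomes $\operatorname{Ti}_2\left(\frac{A}{\alpha}\right)=\operatorname{Ti}_2(1)=G$, by the definition of Catalan's constant recalled in the introduction.
\item The first term on the right is $H\left(\frac{\pi}{n},\frac{\pi}{n}\right)$, which is exactly the stated integral $\int_0^{\pi/n}\frac{1}{x}\operatorname{arctan}\left(\operatorname{cotg}\left(\frac{\pi}{n}\right)\operatorname{tanh}(x)\right)\operatorname{dx}$. For $n=2$ it vanishes, since $\operatorname{cotg}(\pi/2)=0$.
\item In the series, the arguments simplify as
$$\frac{A}{k\pi-\alpha}=\frac{\pi/n}{k\pi-\pi/n}=\frac{1}{nk-1},\qquad \frac{A}{k\pi+\alpha}=\frac{1}{nk+1}.$$
\end{itemize}
Substituting these gives the claimed identity.

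If one prefers to start from Theorem \ref{thm.2} directly rather than from Corollary \ref{cor.2}, one more step is needed. I would justify the termwise identity
$$\operatorname{arctan}\left(\frac{x}{k\pi-\alpha}\right)-\operatorname{arctan}\left(\frac{x}{k\pi+\alpha}\right)=\operatorname{arctan}\left(\frac{2\alpha x}{x^2+(k\pi)^2-\alpha^2}\right),\qquad x\in(0,A].$$
This follows from the subtraction formula $\operatorname{arctan}u-\operatorname{arctan}v=\operatorname{arctan}\frac{u-v}{1+uv}$, which holds without a branch correction because $uv=\frac{x^2}{(k\pi)^2-\alpha^2}>0$. This positivity uses $k\pi>\alpha$ for every $k\geq 1$, which is true since $\alpha=\pi/n<\pi$. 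Dividing by $x$ and integrating over $[0,A]$ then produces $\operatorname{Ti}_2\left(\frac{A}{k\pi-\alpha}\right)-\operatorname{Ti}_2\left(\frac{A}{k\pi+\alpha}\right)$.

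I do not expect a genuine obstacle; the only points needing care are the sign condition above and the convergence of the series. For convergence, $\operatorname{Ti}_2(y)=y+O(y^3)$ as $y\to0$, so
$$\operatorname{Ti}_2\left(\frac{1}{nk-1}\right)-\operatorname{Ti}_2\left(\frac{1}{nk+1}\right)=\frac{2}{n^2k^2-1}+O(k^{-3}).$$
The series therefore converges absolutely, which is consistent with the convergence already asserted in Theorem \ref{thm.2}.
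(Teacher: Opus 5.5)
Your proposal is correct and matches the paper's proof. Both specialize Corollary~\ref{cor.2} at $A=\alpha=\frac{\pi}{n}$, note that $\frac{A}{\alpha}=1$ so the left-hand side is $\operatorname{Ti}_2(1)=G$, and simplify $\frac{A}{k\pi\mp\alpha}=\frac{1}{nk\mp 1}$. Your added checks are accurate refinements, not a different route: the admissibility check $0<\frac{\pi}{n}<\pi$, the termwise subtraction identity with numerator $2\alpha x$ (the paper's proof of Corollary~\ref{cor.2} misprints it as $x^2$), and the $O(k^{-2})$ convergence estimate.
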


\vspace{0.2cm}
\newtheorem{rem}{Remark}
\begin{rem}\label{rem.1}
\textit{Let $n=2$ in Corollary \ref{cor.3}. Then}
\begin{equation}
G=\sum_{k=1}^{\infty}\Bigg[\operatorname{Ti}_2\left(\frac{1}{2k-1}\right)-\operatorname{Ti}_2\left(\frac{1}{2k+1}\right) \Bigg].
\end{equation}
\end{rem}

\vspace{0.2cm}
\begin{corollary}{\textbf{(Clausen’s reduction of $\operatorname{Ti}_2\left(\operatorname{tan}\left(\theta\right)\right)$)}}\label{cor.4}
\textit{Define the Clausen’s function as}
\begin{equation}
\nonumber
\operatorname{Cl}_2\left(\phi\right):=-\int_0^{\phi}\operatorname{ln}\left(2\operatorname{sin}\left(\frac{t}{2}\right)\right)\operatorname{dt}, \hspace{0.2cm} 0\leq\phi\leq 2\pi.
\end{equation}
\textit{Then for every $0<\theta<\frac{\pi}{2}$, we have}
\begin{equation}
\operatorname{Ti}_2\left(\operatorname{tan}\left(\theta\right)\right)=\theta\operatorname{ln}\left(\operatorname{tan}\left(\theta\right)\right)+\frac{1}{2}\operatorname{Cl}_2\left(2\theta\right)+\frac{1}{2}\operatorname{Cl}_2\left(\pi-2\theta\right).
\end{equation}
\end{corollary}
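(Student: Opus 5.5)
Substitute $x=\tan(t)$ in the definition of $\operatorname{Ti}_2$, integrate by parts, and then use the Fourier series of $\ln\sin$ and $\ln\cos$. With $x=\tan(t)$, $dx/x=\frac{dt}{\sin(t)\cos(t)}=\frac{2\,dt}{\sin(2t)}$, so
\begin{equation}
\nonumber
\operatorname{Ti}_2\left(\operatorname{tan}\left(\theta\right)\right)=\int_0^{\theta}\frac{t}{\operatorname{sin}\left(t\right)\operatorname{cos}\left(t\right)}\operatorname{dt}.
\end{equation}
Since $\frac{d}{dt}\ln(\tan(t))=\frac{1}{\sin(t)\cos(t)}$, integration by parts gives
\begin{equation}
\nonumber
\operatorname{Ti}_2\left(\operatorname{tan}\left(\theta\right)\right)=\theta\operatorname{ln}\left(\operatorname{tan}\left(\theta\right)\right)-\int_0^{\theta}\operatorname{ln}\left(\operatorname{tan}\left(t\right)\right)\operatorname{dt}.
\end{equation}
The boundary term at $0$ vanishes because $t\ln(\tan(t))\sim t\ln(t)\to0$, and the remaining integral converges since the singularity at $0$ is logarithmic. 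This produces the first term of the claim.

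Next, I split $\ln(\tan(t))=\ln(2\sin(t))-\ln(2\cos(t))$. For the sine part I substitute $u=2t$:
\begin{equation}
\nonumber
\int_0^{\theta}\operatorname{ln}\left(2\operatorname{sin}\left(t\right)\right)\operatorname{dt}=\frac{1}{2}\int_0^{2\theta}\operatorname{ln}\left(2\operatorname{sin}\left(\frac{u}{2}\right)\right)\operatorname{du}=-\frac12\operatorname{Cl}_2\left(2\theta\right),
\end{equation}
which is valid because $0<2\theta<\pi$ lies in the range of the definition. For the cosine part, $\cos(t)=\sin\left(\frac{\pi-2t}{2}\right)$, so the substitution $u=\pi-2t$ gives
\begin{equation}
\nonumber
\int_0^{\theta}\operatorname{ln}\left(2\operatorname{cos}\left(t\right)\right)\operatorname{dt}=\frac12\int_{\pi-2\theta}^{\pi}\operatorname{ln}\left(2\operatorname{sin}\left(\frac{u}{2}\right)\right)\operatorname{du}=\frac12\left[\operatorname{Cl}_2\left(\pi-2\theta\right)-\operatorname{Cl}_2\left(\pi\right)\right].
\end{equation}

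Combining the two parts, $-\int_0^\theta\ln(\tan(t))\,dt=\frac12\operatorname{Cl}_2(2\theta)+\frac12\operatorname{Cl}_2(\pi-2\theta)-\frac12\operatorname{Cl}_2(\pi)$. It therefore remains to show $\operatorname{Cl}_2(\pi)=0$, which is equivalent to the classical integral $\int_0^{\pi/2}\ln(2\sin(t))\,dt=0$, i.e. $\int_0^{\pi/2}\ln(\sin(t))\,dt=-\frac{\pi}{2}\ln 2$. I will prove this either from the series $\operatorname{Cl}_2(\phi)=\sum_{k\ge1}\frac{\sin(k\phi)}{k^2}$, which vanishes at $\phi=\pi$, or by the duplication trick: write $J=\int_0^{\pi/2}\ln(\sin t)\,dt=\int_0^{\pi/2}\ln(\cos t)\,dt$, so that $2J=\int_0^{\pi/2}\ln\left(\frac{\sin(2t)}{2}\right)dt=J-\frac{\pi}{2}\ln 2$.

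The main obstacle is this last step together with keeping track of signs and integration limits in the cosine substitution. Convergence at the endpoints is harmless because all the singularities involved are logarithmic.
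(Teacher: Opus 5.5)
Your proof is correct and follows the paper's route exactly: the substitution $x=\tan t$, integration by parts against $\ln(\tan t)$, and the split $\ln(\tan t)=\ln(2\sin t)-\ln(2\cos t)$, handled with the substitutions $u=2t$ and $u=\pi-2t$. The one difference is in your favour: the paper goes straight from the substitution $t=\pi-2u$ to $\int_0^\theta\ln(2\cos u)\,du=\tfrac12\operatorname{Cl}_2(\pi-2\theta)$, which silently uses $\operatorname{Cl}_2(\pi)=0$, whereas you isolate that term and justify it, for instance by the duplication argument giving $\int_0^{\pi/2}\ln(\sin t)\,dt=-\tfrac{\pi}{2}\ln 2$.
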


\vspace{0.2cm}
\newtheorem{lemma}{Lemma}
\begin{lemma}{\textbf{(Representation of Catalan’s constant)}}\label{lem.1}
Catalan's constant admits the following representation
\begin{equation}\label{eq:L1-main}
G
=
K(1)
+
(1-\cot(1))
+
\sum_{n=1}^{\infty}
\frac{(-1)^n}{(2n+1)^2}\,
\pi^{-(2n+1)}
\left[
\zeta\!\left(2n+1,1+\frac{1}{\pi}\right)
-
\zeta\!\left(2n+1,1-\frac{1}{\pi}\right)
\right].
\end{equation}
where
\begin{equation}\label{eq:K1-final-compressed}
K(1)
=
-\sum_{j=1}^{\infty}\frac{\sin(2j)}{j}\,\operatorname{Ei}(-2j)
+\pi\log\Gamma\!\left(\frac{1}{\pi}\right)
+\left(1-\frac{\pi}{2}\right)\log\left(\pi\right)
-\frac{\pi}{2}\log\!\left(\frac{\pi}{\sin \left(1\right
)}\right).
\end{equation}
\end{lemma}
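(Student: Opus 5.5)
The plan is to specialize Theorem \ref{thm.2} (equivalently Corollary \ref{cor.2}) to $A=\alpha=1$. This is admissible since $0<1<\pi$, and it gives
\[
G=\operatorname{Ti}_2(1)=H(1,1)+\sum_{k=1}^{\infty}\Bigl[\operatorname{Ti}_2\Bigl(\tfrac{1}{k\pi-1}\Bigr)-\operatorname{Ti}_2\Bigl(\tfrac{1}{k\pi+1}\Bigr)\Bigr].
\]
I would then treat the two pieces separately. The series should produce the term $1-\cot(1)$ together with the Hurwitz zeta sum, and $H(1,1)$ should be identified with $K(1)$.

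\textbf{The series.} For $k\ge1$ we have $0<1/(k\pi\pm1)<1$, so the Maclaurin series $\operatorname{Ti}_2(y)=\sum_{n\ge0}(-1)^n y^{2n+1}/(2n+1)^2$ may be inserted term by term. The interchange of the sums over $k$ and $n$ is justified by absolute convergence: the $n=0$ differences are $O(k^{-2})$, and the terms with $n\ge1$ are $O(k^{-3})$ uniformly.
\begin{itemize}
\item For $n=0$ the terms give $\sum_{k\ge1}\frac{2}{k^2\pi^2-1}$. The cotangent partial fractions $\cot z=\frac1z+\sum_{k\ge1}\frac{2z}{z^2-k^2\pi^2}$ at $z=1$ turn this into $1-\cot(1)$.
\item For $n\ge1$, writing $\frac{1}{k\pi\mp1}=\pi^{-1}(k\mp\frac1\pi)^{-1}$ gives $\sum_{k\ge1}(k\mp\tfrac1\pi)^{-(2n+1)}=\zeta(2n+1,1\mp\tfrac1\pi)$. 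Hence the series equals $(1-\cot 1)$ plus a Hurwitz zeta sum with the coefficients $(-1)^n\pi^{-(2n+1)}/(2n+1)^2$ displayed in \eqref{eq:L1-main}.
\end{itemize}
My direct bookkeeping gives the bracket in the order $\zeta(\cdot,1-\frac1\pi)-\zeta(\cdot,1+\frac1\pi)$, which is the reverse of \eqref{eq:L1-main}. I would check this sign carefully against the convention in Corollary \ref{cor.2} before finalizing, because it could be a misprint in the statement.

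\textbf{The integral $H(1,1)$.} The key observation is that
\[
\sin(\alpha+ix)=\sin\alpha\cosh x+i\cos\alpha\sinh x,
\]
so $\arctan(\cot\alpha\tanh x)=\operatorname{Im}\log\sin(\alpha+ix)$. Using $\log\sin z=i(\tfrac\pi2-z)-\log 2+\log(1-e^{2iz})$ with $e^{2iz}=e^{-2x}e^{2i\alpha}$ yields, for $x>0$,
\[
\arctan(\cot\alpha\tanh x)=\tfrac\pi2-\alpha-\sum_{j\ge1}\frac{\sin(2j\alpha)}{j}e^{-2jx}.
\]
Dividing by $x$ and integrating over $(0,1)$ cannot be done termwise, since each piece diverges at $0$. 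I would therefore write $\int_0^1=\int_0^\infty-\int_1^\infty$ applied to the series part and handle the constant $\tfrac\pi2-\alpha$ separately.

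The tail contributions then give $\int_1^\infty e^{-2jx}\,dx/x=E_1(2j)=-\operatorname{Ei}(-2j)$, which produces the sum $-\sum_j\frac{\sin 2j}{j}\operatorname{Ei}(-2j)$. The remaining regularized integral over $(0,\infty)$ is of Frullani/Malmsten type. Comparing it with Kummer's Fourier series
\[
\log\Gamma(u)=\tfrac12\log\frac{\pi}{\sin\pi u}+(\gamma+\log2\pi)\bigl(\tfrac12-u\bigr)+\frac1\pi\sum_{k\ge1}\frac{\log k\,\sin 2\pi ku}{k}
\]
at $u=\alpha/\pi=1/\pi$ should produce $\pi\log\Gamma(1/\pi)$, $-\frac\pi2\log(\pi/\sin 1)$ and the $\log\pi$ term.

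\textbf{Main obstacle.} This last step is where I expect most of the difficulty. One must regularize consistently (for instance by inserting $x^{s}$ and letting $s\to0$, so that $\int_0^\infty x^{s-1}e^{-2jx}\,dx=\Gamma(s)(2j)^{-s}$) and then track the constants. The $\gamma$ contributions and the $\log 2$ terms must cancel against $\sum_j \sin(2j)/j=\frac{\pi-2}{2}$, so that exactly the coefficient $(1-\frac\pi2)\log\pi$ survives. Once this evaluation is confirmed, setting $K(1)=H(1,1)$ completes the proof.
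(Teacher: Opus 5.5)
Your series computation is correct, and your sign is the right one. The lemma as printed has the Hurwitz bracket reversed: it should read $\zeta(2n+1,1-\frac1\pi)-\zeta(2n+1,1+\frac1\pi)$.

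The paper's own proof defines $S_r=\sum_{k\ge1}[(k\pi-1)^{-r}-(k\pi+1)^{-r}]$ and derives $S_r=\pi^{-r}[\zeta(r,1-\frac1\pi)-\zeta(r,1+\frac1\pi)]$, exactly as you do. The bracket is then flipped in the double-series display and again in the final formula, while the $n=0$ term $1-\cot(1)$ keeps its correct sign.

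A numerical check settles it. With $K(1)\approx 0.5676$, $1-\cot(1)\approx 0.3579$ and $G\approx 0.9160$, the Hurwitz sum must be about $-0.0096$. Your bracket gives this (its $n=1$ term is $-S_3/9\approx -0.0104$); the printed bracket gives $+0.0096$. So keep your sign: you are proving the corrected lemma. Your handling of the series (Maclaurin expansion of $\operatorname{Ti}_2$, Fubini, cotangent partial fractions, Hurwitz reindexing) is the same as the paper's.

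For $K(1)$ you use the same Fourier expansion of $\arctan(\cot\alpha\tanh x)$, but you justify termwise integration differently.

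\emph{The paper's route.} It folds in the constant using $\sum_j\sin(2j)/j=\frac\pi2-1$, so the integrand becomes $\sum_j\frac{\sin 2j}{j}\cdot\frac{1-e^{-2jx}}{x}$. By Abel summation it bounds $|f_N(x)/x|$ uniformly by $2/\sin(1)$: the partial sums of $\sin 2j$ are bounded, and $(1-e^{-2jx})/j$ decreases in $j$. It then applies dominated convergence and uses $\int_0^1\frac{1-e^{-\xi x}}{x}\,dx=\gamma+\log\xi-\operatorname{Ei}(-\xi)$.

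\emph{Your route.} Your regularization also closes. First, $K(1)=\lim_{s\to0^+}\int_0^1x^{s-1}g(x)\,dx$. For $s>0$ the interchange is absolutely justified because $\sum_j j^{-1-s}<\infty$. This gives $\frac{c}{s}-\Gamma(s)2^{-s}D(s)+\sum_j\frac{\sin 2j}{j}\int_1^\infty x^{s-1}e^{-2jx}\,dx$, with $c=\frac\pi2-1$ and $D(s)=\sum_j\sin(2j)\,j^{-1-s}$.

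The fact you must state explicitly is that $D$ is holomorphic at $s=0$. This holds because a Dirichlet series whose coefficients have bounded partial sums is analytic for $\mathrm{Re}\,s>-1$. Hence $D(s)=c-s\sum_j\frac{\sin(2j)\log j}{j}+O(s^2)$. The poles cancel, and the limit is $(\gamma+\log 2)(\frac\pi2-1)+\sum_j\frac{\sin(2j)\log j}{j}-\sum_j\frac{\sin 2j}{j}\operatorname{Ei}(-2j)$. This is exactly the paper's pre-Kummer expression. Kummer's series at $u=1/\pi$, which you state correctly, then produces the constants as you predict.

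\emph{Comparison.} The paper's argument stays real-variable and elementary. Yours replaces the Abel-summation estimate with analyticity of a Dirichlet series. It also has a side benefit: by Hurwitz's formula, $D(s)$ is proportional to $\zeta(-s,\frac1\pi)-\zeta(-s,1-\frac1\pi)$. The $\log\Gamma(1/\pi)$ term can therefore be derived from Lerch's formula rather than quoted.
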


\newpage
\section{Preliminaries}

Before we proceed to prove the main results, it is necessary to introduce certain objects, tools and identities that will be important for our subsequent considerations.
\subsection{\underline{Special functions and identities}}

\newtheorem{definition}{Definition}
\vspace{0.1cm}
\begin{definition}[\textbf{Inverse tangent integral}]\cite{ramanujan1915integral}
\textit{Let $y\in\mathbb{R}$, then the inverse tangent integral is defined as}
\begin{equation}
\nonumber
\operatorname{Ti}_2\left(y\right):=\int_0^y\frac{1}{x}\operatorname{arctan}\left(x\right)\operatorname{dx}.
\end{equation}
\end{definition}

\begin{definition}[\textbf{Catalan’s constant}]
\begin{equation}
\nonumber
G:=\int_0^1\frac{1}{x}\operatorname{arctan}\left(x\right)\operatorname{dx}.
\end{equation}
\end{definition}

Equivalently, it can be defined as a specific value of the Dirichlet beta function \cite{apostol2013introduction}
\begin{equation}
\nonumber
G:=\beta\left(2\right)=\sum_{n=0}^{\infty}\frac{\left(-1\right)^n}{\left(2n+1\right)^2}.
\end{equation}

We will also frequently use the dilogarithmic function.

\begin{definition}[\textbf{Dilogarithm}]\cite{zagier2007dilogarithm}
\textit{Let $z\in\mathbb{C}$, with $\vert z\vert\leq 1$. Then the dilogarithmic function is defined as}
\begin{equation}
\nonumber
\operatorname{Li}_2\left(z\right):=\sum_{n=1}^{\infty}\frac{z^n}{n^2}.
\end{equation}
\end{definition}

Additionally, we recall the key identity between $\operatorname{Ti}_2$ and $\operatorname{Li}_2$ that will be useful in many simplifications and identifications with the dilogarithm.

For real $x$, we have
\begin{equation}
\nonumber
\operatorname{Ti}_2\left(x\right)=\mathscr{Im}\left(\operatorname{Li}_2\left(ix\right)\right),
\end{equation}
where $\mathscr{Im}$ denotes the imaginary part \cite{maximon2003dilogarithm}.

\newpage

\vspace{0.2cm}
\subsection{\underline{Exponential integral}}

\vspace{0.1cm}
\begin{definition}[\textbf{Exponential integral}]
\textit{Let $x$ be a real non-zero number, then the exponential integral is given by}
\begin{equation}
\nonumber
\operatorname{Ei}\left(x\right):=-\int_{-x}^{\infty}\frac{e^{-t}}{t}\operatorname{dt}.
\end{equation}
\end{definition}

The exponential integral can also be expressed through the following identity, which will be especially important in the proof of Lemma \ref{lem.1}.

For any real $\xi>0$, we have
\begin{equation}
\nonumber
\int_0^1\frac{e^{-\xi x}-1}{x}\operatorname{dx}=\operatorname{Ei}\left(-\xi\right)-\gamma-\operatorname{ln}\left(\xi\right),
\end{equation}
where $\gamma$ represents the Euler-Mascheroni constant.

\subsection{\underline{Kummer Fourier series, Hurwitz zeta function and Clausen’s function}}\label{subsec:3.3}

\vspace{0.1cm}
Finally, we shall introduce the so-called Kummer Fourier series for the log-gamma function \cite{erdelyi1953higher} and the Hurwitz zeta function. Both will be used principally in the proof of Lemma \ref{lem.1}.

For any real $x$ such that $0<x<1$, the log-gamma function of $x$ satisfies
\begin{equation}
\nonumber
\operatorname{ln}\left(\Gamma\left(x\right)\right)=\frac{1}{2}\operatorname{ln}\left(2\pi\right)+\left(x-\frac{1}{2}\right)\operatorname{ln}\left(2\operatorname{sin}\left(\pi x\right)\right)+\frac{1}{\pi}\sum_{j=1}^{\infty}\frac{\operatorname{sin}\left(2\pi jx\right)}{x}\operatorname{ln}\left(x\right).
\end{equation}

\vspace{0.1cm}
\begin{definition}[\textbf{Hurwitz zeta function}]\cite{apostol2013introduction}
\textit{For any $z\in\mathbb{C}$ such that $\operatorname{Re}\left(z\right)>1$ and a constant $c\notin\{0,-1,-2,...\}$, the Hurwitz zeta function is defined as}
\begin{equation}
\nonumber
\zeta\left(z,c\right)=\sum_{k=0}^{\infty}\frac{1}{\left(k+c\right)^z}.
\end{equation}
\end{definition}

\begin{definition}[\textbf{Clausen’s function}]\cite{apostol2013introduction}
\textit{The Clausen’s function is given by the following integral}
\begin{equation}
\operatorname{Cl}_2\left(\phi\right):=-\int_0^{\phi}\operatorname{ln}\left(2\Bigg\vert\operatorname{sin}\left(\frac{t}{2}\right)\Bigg\vert\right)\operatorname{dt},
\end{equation}
\textit{where $0<\phi<2\pi$.}
\end{definition}

\newpage
\section{Proof of Theorem \ref{thm.1} and Corollary \ref{cor.1} - Inverse tangent integral with tunable endpoint and evaluation of Catalan’s constant}

In this section, we provide a complete proof of Theorem \ref{thm.1}. To avoid notational ambiguity, we shall write \(\mathscr{R}(z)\) and \(\mathscr{I}(z)\) for the real and
imaginary parts of a complex number \(z\). Moreover, $\operatorname{Log}$ will denote the principal branch of the logarithm, and
$\operatorname{Li}_2$ the analytically continued dilogarithm on its principal branch,
holomorphic on \(\mathbb{C}\setminus [1,\infty)\). We also use the standard derivative formula
\begin{equation}
\frac{d}{dz}\operatorname{Li}_2(z) = -\frac{\operatorname{Log}(1-z)}{z},
\qquad z\in \mathbb{C}\setminus [1,\infty),
\end{equation}
with the removable singularity at \(z=0\).

We begin with the auxiliary integral
\begin{equation}
I(a,b):=\int_0^b \arctan\!\left(\frac{a+\cos\left(\beta\right)}{\sin\left(\beta\right)}\right)\,d\beta,
\qquad a>0,\quad 0<b<\pi.
\end{equation}

Let us define the following function
\[
f(a,\beta):=\arctan\!\left(\frac{a+\cos\beta}{\sin\beta}\right),
\qquad a>0,\quad 0\le \beta\le b<\pi.
\]
Since \(0<b<\pi\), we have \(\sin\beta>0\) for every \(\beta\in(0,b]\), and the integrand itself is
continuous on the interval \([0,b]\), with
\begin{equation}
\lim_{\beta\to 0^+}\arctan\!\left(\frac{a+\cos\beta}{\sin\beta}\right)=\frac{\pi}{2}.
\end{equation}
Hence, the auxiliary integral \(I(a,b)\) is well defined.

Moreover, the function \(f\) is \(C^1\) in the variable \(a\), and its partial derivative corresponds to
\begin{equation}
\frac{\partial f}{\partial a}(a,\beta)
=
\frac{1}{1+\left(\frac{a+\cos\left(\beta\right)}{\sin\left(\beta\right)}\right)^2}\cdot \frac{1}{\sin\left(\beta\right)}
=
\frac{\sin\left(\beta\right)}{1+2a\cos\left(\beta\right)+a^2}.
\end{equation}
Because the denominator
\begin{equation}
1+2a\cos\left(\beta\right)+a^2=(a+\cos\left(\beta\right))^2+\sin^2\left(\beta\right)
\end{equation}
is strictly positive for all values \(a>0\) and all \(\beta\in[0,b]\), the partial derivative
\(\partial f/\partial a\) is continuous on every compact rectangle
\([a_0,a_1]\times [0,b]\subset (0,\infty)\times [0,b]\). Therefore, differentiation under
the integral sign is justified, and we directly obtain
\begin{equation}
\frac{\partial I}{\partial a}(a,b)
=
\int_0^b \frac{\sin\left(\beta\right)}{1+2a\cos\left(\beta\right)+a^2}\,d\beta.
\end{equation}
Now, let us perform the following substitution
\begin{equation}
u=\cos\left(\beta\right)\implies du=-\sin\left(\beta\right)\,d\beta.
\end{equation}
When \(\beta=0\), we have \(u=1\), and when \(\beta=b\), we have \(u=\cos \left(b\right)\). Thus
\begin{equation}
\frac{\partial I}{\partial a}(a,b)
=
\int_{\cos b}^{1}\frac{du}{1+a^2+2au}.
\end{equation}
This integral itself can be considered elementary
\begin{equation}
\int \frac{du}{1+a^2+2au}
=
\frac{1}{2a}\operatorname{Log}(1+a^2+2au),
\end{equation}
so
\begin{equation}
\frac{\partial I}{\partial a}(a,b)
=
\frac{1}{2a}
\left[
\operatorname{Log}(1+a^2+2au)
\right]_{u=\cos b}^{u=1}.
\end{equation}
Hence
\begin{equation}\label{eq.24}
\frac{\partial I}{\partial a}(a,b)
=
\frac{1}{2a}
\operatorname{Log}\!\left(\frac{(1+a)^2}{1+2a\cos b+a^2}\right).
\end{equation}

Next, we shall define the following function
\begin{equation}
F(a,b):=
\frac{\pi b}{2}-\frac{b^2}{2}
-\operatorname{Li}_2(-a)
+\mathscr{R}\!\bigl(\operatorname{Li}_2(-ae^{ib})\bigr).
\end{equation}
From here, we claim that \(\partial_a F(a,b)=\partial_a I(a,b)\). 

Indeed, since \(-a\in (-\infty,0)\subset \mathbb{C}\setminus [1,\infty)\), the derivative
formula for the dilogarithm \(\operatorname{Li}_2\) gives us
\begin{equation}
\frac{d}{da}\bigl[-\operatorname{Li}_2(-a)\bigr]
=
-\left(
-\frac{\operatorname{Log}(1-(-a))}{-a}
\right)(-1)
=
\frac{\operatorname{Log}(1+a)}{a}.
\end{equation}
Now focus on the second dilogarithmic term. For \(0<b<\pi\), the point \(-ae^{ib}\) never
lies on the branch cut \([1,\infty)\), so \(\operatorname{Li}_2(-ae^{ib})\) is holomorphic in \(a\), and
\begin{equation}\label{eq.27}
\frac{d}{da}\operatorname{Li}_2(-ae^{ib})
=
-\frac{\operatorname{Log}(1+ae^{ib})}{-ae^{ib}}\cdot (-e^{ib})
=
-\frac{\operatorname{Log}(1+ae^{ib})}{a}.
\end{equation}
Taking the real parts of both sides in (\ref{eq.27}) yields
\begin{equation}
\frac{d}{da}\mathscr{R}\!\bigl(\operatorname{Li}_2(-ae^{ib})\bigr)
=
-\frac{1}{a}\mathscr{R}\!\bigl(\operatorname{Log}(1+ae^{ib})\bigr).
\end{equation}
Since the real part of \(1+a^{ib}\) satisfies
\begin{equation}
\Re\!\bigl(\operatorname{Log}(1+ae^{ib})\bigr)=\operatorname{Log}|1+ae^{ib}|,
\end{equation}
and
\begin{equation}
|1+ae^{ib}|^2
=
(1+ae^{ib})(1+ae^{-ib})
=
1+2a\cos \left(b\right)+a^2,
\end{equation}
we obtain
\begin{equation}
\mathscr{R}\!\bigl(\operatorname{Log}(1+ae^{ib})\bigr)
=
\frac{1}{2}\operatorname{Log}(1+2a\cos b+a^2).
\end{equation}
Therefore, we can straightforwardly compute its derivative
\begin{equation}
\frac{d}{da}\mathscr{R}\!\bigl(\operatorname{Li}_2(-ae^{ib})\bigr)
=
-\frac{1}{2a}\operatorname{Log}(1+2a\cos b+a^2).
\end{equation}
Combining the last two calculations gives us
\begin{equation}\label{eq.33}
\frac{\partial F}{\partial a}(a,b)
=
\frac{\operatorname{Log}(1+a)}{a}
-\frac{1}{2a}\operatorname{Log}(1+2a\cos b+a^2)
=
\frac{1}{2a}
\operatorname{Log}\!\left(\frac{(1+a)^2}{1+2a\cos b+a^2}\right).
\end{equation}
Comparing the relations (\ref{eq.24}) and (\ref{eq.33}), we conclude that
\begin{equation}
\frac{\partial F}{\partial a}(a,b)=\frac{\partial I}{\partial a}(a,b).
\end{equation}

It remains to determine the constant of integration inside the auxiliary integral. Setting \(a=0\), we have
\begin{equation}
F(0,b)=\frac{\pi b}{2}-\frac{b^2}{2},
\end{equation}
since \(\operatorname{Li}_2(0)=0\). On the other hand, we have
\begin{equation}
I(0,b)
=
\int_0^b \arctan\!\left(\frac{\cos\left(\beta\right)}{\sin\left(\beta\right)}\right)\,d\beta
=
\int_0^b \arctan(\cot\left(\beta\right))\,d\beta.
\end{equation}
Moreover, for \(0<\beta<\pi\), we have \(\arctan(\cot\left(\beta\right))=\frac{\pi}{2}-\beta\), because \(\frac{\pi}{2}-\beta\in(-\pi/2,\pi/2)\) and also \(\tan\!\left(\frac{\pi}{2}-\beta\right)=\cot\left(\beta\right)\). Thus
\[
I(0,b)
=
\int_0^b \left(\frac{\pi}{2}-\beta\right)\,d\beta
=
\frac{\pi b}{2}-\frac{b^2}{2}
=
F(0,b).
\]
Since \(I\) and \(F\) have the same derivative with respect to the variable \(a\) and are identical at \(a=0\), it follows that \(I(a,b)=F(a,b)\) for all \(a>0\) and \(0<b<\pi\), as claimed.

Now we derive a formula for the inverse tangent integral in terms of
\(\mathscr{I}(\operatorname{Li}_2(1+ia))\). In fact, this is the precise quantity that must be matched to the auxiliary
integral. We begin by proving the following proposition.

\newtheorem{prop}{Proposition}
\begin{prop}
\phantomsection\label{my.prop}
For every \(a>0\), the inverse tangent integral satisfies
\begin{equation}
\operatorname{Ti}_2(a)
=
\arctan(a)\operatorname{Log}(a)
+
\mathscr{I}\!\bigl(\operatorname{Li}_2(1+ia)\bigr)
-\frac{\pi}{4}\operatorname{Log}(1+a^2).
\end{equation}
\end{prop}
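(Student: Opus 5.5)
The natural approach is to differentiate both sides in $a$ and compare values at a base point. Put
\[
R(a):=\arctan(a)\operatorname{Log}(a)+\mathscr{I}\bigl(\operatorname{Li}_2(1+ia)\bigr)-\frac{\pi}{4}\operatorname{Log}(1+a^2),\qquad a>0.
\]
Since $\operatorname{Ti}_2'(a)=\arctan(a)/a$, it suffices to show that $R'(a)=\arctan(a)/a$ on $(0,\infty)$ and that $R(a)\to 0=\operatorname{Ti}_2(0)$ as $a\to0^+$.

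\textbf{Derivative.} For $a>0$ the point $1+ia$ has positive imaginary part, so it lies off the branch cut $[1,\infty)$. The function $a\mapsto \operatorname{Li}_2(1+ia)$ is therefore holomorphic there, and the derivative formula for $\operatorname{Li}_2$ gives
\[
\frac{d}{da}\operatorname{Li}_2(1+ia)=-\frac{\operatorname{Log}(-ia)}{1+ia}\cdot i .
\]
Because $a>0$, we have $\operatorname{Log}(-ia)=\operatorname{Log}(a)-\tfrac{i\pi}{2}$ exactly on the principal branch. Multiplying by $-i/(1+ia)=-i(1-ia)/(1+a^2)=(-a-i)/(1+a^2)$, the imaginary part comes out as
\[
\frac{d}{da}\mathscr{I}\bigl(\operatorname{Li}_2(1+ia)\bigr)=\frac{-\operatorname{Log}(a)+\frac{\pi a}{2}}{1+a^2}.
\]
The remaining two terms are elementary:
\[
\frac{d}{da}\Bigl[\arctan(a)\operatorname{Log}(a)\Bigr]=\frac{\operatorname{Log}(a)}{1+a^2}+\frac{\arctan(a)}{a},\qquad \frac{d}{da}\Bigl[-\frac{\pi}{4}\operatorname{Log}(1+a^2)\Bigr]=-\frac{\pi a}{2(1+a^2)}.
\]
Adding the three derivatives, the $\operatorname{Log}(a)/(1+a^2)$ terms cancel, and so do the $\pi a/(2(1+a^2))$ terms. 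This leaves $R'(a)=\arctan(a)/a$.

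\textbf{Base point.} As $a\to0^+$, we have $\arctan(a)\operatorname{Log}(a)\sim a\operatorname{Log}(a)\to0$ and $\operatorname{Log}(1+a^2)\to0$. For the dilogarithm term, $\operatorname{Li}_2(1+ia)\to\operatorname{Li}_2(1)=\pi^2/6$, which is real. This limit needs care because $1$ is the endpoint of the branch cut. I would justify it using the continuity of $\operatorname{Li}_2$ on the closed unit disk together with the integral representation $\operatorname{Li}_2(z)=-\int_0^1 \operatorname{Log}(1-zt)\,t^{-1}\,dt$. Here the integrand has only an integrable logarithmic singularity near $t=1$, uniformly in $z$, so dominated convergence applies. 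Hence $\mathscr{I}(\operatorname{Li}_2(1+ia))\to0$ and $R(0^+)=0$.

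Since $R$ and $\operatorname{Ti}_2$ have the same derivative on $(0,\infty)$ and the same limit at $0^+$, they coincide.

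\textbf{Main obstacle.} The delicate points are the branch bookkeeping and the boundary limit, not the algebra. The bookkeeping comes down to making sure $\operatorname{Log}(-ia)=\operatorname{Log}(a)-i\pi/2$ holds for the principal branch with no $2\pi i$ ambiguity, which is true since $-ia$ lies on the negative imaginary axis. The boundary limit is the continuity of $\operatorname{Li}_2$ at $z=1$ approached from the upper half-plane, which I expect to be the step requiring the most care.
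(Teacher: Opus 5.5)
Your proof is correct and is essentially the paper's own argument: you define the right-hand side, differentiate using $\frac{d}{dz}\operatorname{Li}_2(z)=-\operatorname{Log}(1-z)/z$ together with $\operatorname{Log}(-ia)=\operatorname{Log}(a)-\frac{i\pi}{2}$, observe the cancellations that leave $\arctan(a)/a$, and fix the constant by letting $a\to 0^+$. Your dominated-convergence justification of $\mathscr{I}\bigl(\operatorname{Li}_2(1+ia)\bigr)\to 0$ via $\operatorname{Li}_2(z)=-\int_0^1 \operatorname{Log}(1-zt)\,t^{-1}\,dt$ fills in a step the paper only asserts; note, however, that $1+ia$ lies outside the closed unit disk, so the work is done by the integral representation (valid on $\mathbb{C}\setminus[1,\infty)$), not by continuity on the disk.
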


\begin{proof}[Proof of Proposition \ref{my.prop}]\normalfont
Let us define the following quantity
\begin{equation}\label{eq.38}
H(a):=\arctan(a)\operatorname{Log}(a)
+
\mathscr{I}\!\bigl(\operatorname{Li}_2(1+ia)\bigr)
-\frac{\pi}{4}\operatorname{Log}(1+a^2),
\qquad a>0.
\end{equation}
We want to show that \(H'(a)=\arctan(a)/a\), which actually implies that
\begin{equation}
H(a)=\int_0^a \frac{\arctan x}{x}\,dx=\operatorname{Ti}_2(a),
\end{equation}
once the initial value is checked.

First, notice that
\begin{equation}\label{eq.40}
\frac{d}{da}\bigl(\arctan(a)\operatorname{Log}(a)\bigr)
=
\frac{\operatorname{Log}(a)}{1+a^2}+\frac{\arctan(a)}{a}.
\end{equation}

Next we compute the derivative of the dilogarithmic term in (\ref{eq.38}). Because \(1+ia\notin [1,\infty)\), then
for every \(a>0\), the derivative formula for \(\operatorname{Li}_2\) gives us
\begin{equation}
\frac{d}{da}\operatorname{Li}_2(1+ia)
=
-\frac{\operatorname{Log}(1-(1+ia))}{1+ia}\cdot i
=
-\frac{i\,\operatorname{Log}(-ia)}{1+ia}.
\end{equation}
For \(a>0\), the principal logarithm satisfies \(\operatorname{Log}(-ia)=\operatorname{Log}(a)-\frac{i\pi}{2}\). Therefore, we obtain the shift identity
\begin{equation}
-i\,\operatorname{Log}(-ia)
=
-i\operatorname{Log}(a)-\frac{\pi}{2}.
\end{equation}
Hence
\begin{equation}\label{eq.42}
\frac{d}{da}\operatorname{Li}_2(1+ia)
=
\frac{-\frac{\pi}{2}-i\operatorname{Log}(a)}{1+ia}.
\end{equation}
Multiply the numerator and the denominator of (\ref{eq.42}) by \(1-ia\) yields
\begin{equation}
\frac{d}{da}\operatorname{Li}_2(1+ia)
=
\frac{\bigl(-\frac{\pi}{2}-i\operatorname{Log}(a)\bigr)(1-ia)}{1+a^2}.
\end{equation}
Now expand the numerator in the following way
\begin{equation}\label{eq.44}
\bigl(-\tfrac{\pi}{2}-i\operatorname{Log}(a)\bigr)(1-ia)
=
-\frac{\pi}{2}
+
i\frac{a\pi}{2}
-
i\operatorname{Log}(a)
-
a\operatorname{Log}(a).
\end{equation}
Its imaginary part corresponds to \(\frac{a\pi}{2}-\operatorname{Log}(a)\).
Hence
\begin{equation}\label{eq.46}
\frac{d}{da}\mathscr{I}\!\bigl(\operatorname{Li}_2(1+ia)\bigr)
=
\mathscr{I}\!\left(\frac{d}{da}\operatorname{Li}_2(1+ia)\right)
=
\frac{\frac{a\pi}{2}-\operatorname{Log}(a)}{1+a^2}.
\end{equation}

Finally,
\begin{equation}\label{eq.47}
\frac{d}{da}\left(-\frac{\pi}{4}\operatorname{Log}(1+a^2)\right)
=
-\frac{\pi a}{2(1+a^2)}.
\end{equation}

Combining the relations (\ref{eq.40}), (\ref{eq.46}), and (\ref{eq.47}), we obtain
\begin{equation}
H'(a)
=
\left(\frac{\operatorname{Log}(a)}{1+a^2}+\frac{\arctan(a)}{a}\right)
+
\frac{\frac{a\pi}{2}-\operatorname{Log}(a)}{1+a^2}
-
\frac{\pi a}{2(1+a^2)}
=
\frac{\arctan(a)}{a}.
\end{equation}
Therefore
\begin{equation}
H(a)=\int_0^a \frac{\arctan x}{x}\,dx + C
\end{equation}
for some integration constant \(C\in\mathbb{R}\).

In order to determine the integration constant \(C\), we should let \(a\to 0^+\) in (\ref{eq.38}). We get
\[
\arctan(a)\operatorname{Log}(a)\to 0,
\qquad
\mathscr{I}(\operatorname{Li}_2(1+ia))\to \mathscr{I}(\operatorname{Li}_2(1))=0,
\qquad
\operatorname{Log}(1+a^2)\to 0.
\]
Hence \(H(a)\to 0\), which actually implies that \(\operatorname{Ti}_2(0)=0\). Thus \(C=0\), and the claimed formula naturally follows.

This completes the proof of Proposition \ref{my.prop}.
\end{proof}

In the following procedure, let us focus on the definition and behavior of the terms \(\psi(a)\) and \(\phi_a(b)\). Motivated by the preceding proposition, let us define
\begin{equation}
\psi(a):=\mathscr{I}\!\bigl(\operatorname{Li}_2(1+ia)\bigr),
\end{equation}
and, for fixed \(a>0\), we define
\begin{equation}
\phi_a(b):=I(a,b)-\frac{\pi b}{2}+\frac{b^2}{2},
\qquad 0\le b\le \pi.
\end{equation}\label{eq.52}
Using the closed form of \(I(a,b)\), we obtain
\begin{equation}\label{eq.52}
\phi_a(b)
=
-\operatorname{Li}_2(-a)+\mathscr{R}\!\bigl(\operatorname{Li}_2(-ae^{ib})\bigr),
\end{equation}
for every \(0<b<\pi\).

The identity we seek is
\begin{equation}
\phi_a(b)=\psi(a),
\tag{4.6}
\end{equation}
because then the formula for the inverse tangent integral \(\operatorname{Ti}_2(a)\) becomes
\begin{equation}\label{eq.53}
\operatorname{Ti}_2(a)
=
\arctan(a)\operatorname{Log}(a)+\phi_a(b)-\frac{\pi}{4}\operatorname{Log}(1+a^2)
=
\arctan(a)\operatorname{Log}(a)+I(a,b)-\frac{\pi b}{2}+\frac{b^2}{2}
-\frac{\pi}{4}\operatorname{Log}(1+a^2),
\end{equation}
which exactly represents the statement of Theorem \ref{thm.1}.

In other words, all that remains to prove is the fact that under the admissibility condition, there exists a
unique \(b\in(0,\pi)\) satisfying the equality (\ref{eq.52}).

Let us now focus on the existence and uniqueness of the endpoint \(b(a)\) by proving the following lemma. 

\begin{lemma}\label{lem.2}
Let
\begin{equation}
\psi(a):=\mathscr{I}\!\bigl(\operatorname{Li}_2(1+ia)\bigr),
\qquad
\phi_a(b):=I(a,b)-\frac{\pi b}{2}+\frac{b^2}{2},
\qquad 0\le b\le \pi.
\end{equation}
Assume that
\begin{equation}
0<\psi(a)<\phi_a(\pi)
=
\mathscr{R}\!\bigl(\operatorname{Li}_2(a)\bigr)-\operatorname{Li}_2(-a).
\end{equation}
Then there exists a unique \(b(a)\in(0,\pi)\) such that
\begin{equation}
\phi_a\bigl(b(a)\bigr)=\psi(a).
\end{equation}
\end{lemma}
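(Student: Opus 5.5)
Existence will follow from the intermediate value theorem and uniqueness from strict monotonicity of $\phi_a$ on $[0,\pi]$. Both properties come from the closed form (\ref{eq.52}), namely $\phi_a(b)=-\operatorname{Li}_2(-a)+\mathscr{R}\bigl(\operatorname{Li}_2(-ae^{ib})\bigr)$, or directly from the definition of $\phi_a$ through $I(a,b)$.

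\textbf{Continuity and endpoint values.} First I check that $\phi_a$ is continuous on the closed interval $[0,\pi]$. The integrand $\arctan\bigl((a+\cos\beta)/\sin\beta\bigr)$ is bounded by $\pi/2$ and continuous on $(0,\pi)$. Hence $I(a,\cdot)$ is continuous on $[0,\pi]$, even though the integrand has a jump-type limit at $\beta=\pi$: it tends to $\pm\pi/2$ depending on the sign of $a-1$, and to $0$ if $a=1$. Consequently $\phi_a$ is continuous on $[0,\pi]$, and $\phi_a(0)=I(a,0)=0$. At the other endpoint, letting $b\to\pi^-$ in (\ref{eq.52}) and using continuity of $\operatorname{Li}_2$ on the closed cut plane up to the boundary gives $\phi_a(\pi)=-\operatorname{Li}_2(-a)+\mathscr{R}\bigl(\operatorname{Li}_2(a)\bigr)$, which is the value in the hypothesis. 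For $a>1$ the point $a$ lies on the cut, but the real part is continuous across it, so this limit is still valid.

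\textbf{Strict monotonicity.} By the fundamental theorem of calculus, for $b\in(0,\pi)$,
\[
\phi_a'(b)=\arctan\!\left(\frac{a+\cos b}{\sin b}\right)-\frac{\pi}{2}+b .
\]
I rewrite the arctangent in terms of the argument of the complex number $\sin b+i(a+\cos b)$, which has positive real part. Equivalently, $\frac{\pi}{2}-\arctan\bigl(\frac{a+\cos b}{\sin b}\bigr)$ is the angle $\theta\in(0,\pi)$ with $\cot\theta=\frac{a+\cos b}{\sin b}$, i.e.\ $\theta=\arg(a+e^{ib})$. Then $\phi_a'(b)=b-\arg(a+e^{ib})$. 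Since $a>0$, adding $a$ to $e^{ib}$ moves the point to the right and strictly decreases its argument for $b\in(0,\pi)$. Hence $\arg(a+e^{ib})<b$ and $\phi_a'(b)>0$. A cross-check from the closed form: $\frac{d}{db}\mathscr{R}\operatorname{Li}_2(-ae^{ib})=\mathscr{I}\operatorname{Log}(1+ae^{ib})=\arg(1+ae^{ib})$. This equals $b-\arg(a+e^{ib})$, because $1+ae^{ib}=e^{ib}(a+e^{-ib})$ and $\arg(a+e^{-ib})=-\arg(a+e^{ib})$. Thus $\phi_a$ is strictly increasing on $[0,\pi]$, mapping it bijectively onto $[0,\phi_a(\pi)]$.

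\textbf{Conclusion.} The hypothesis $0<\psi(a)<\phi_a(\pi)$ places $\psi(a)$ strictly inside the range $(\phi_a(0),\phi_a(\pi))$. The intermediate value theorem then gives some $b(a)\in(0,\pi)$ with $\phi_a(b(a))=\psi(a)$, and strict monotonicity makes it unique. The main obstacle is the sign of $\phi_a'$: one must handle the branch of $\arctan$ versus $\arg$ carefully, especially when $a+\cos b<0$, which can occur for $a<1$. The clean route is the geometric identity $\phi_a'(b)=\arg(1+ae^{ib})$. Its positivity is immediate, since $\mathscr{I}(1+ae^{ib})=a\sin b>0$ and the principal argument therefore lies in $(0,\pi)$.
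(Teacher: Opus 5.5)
Your proposal is correct and follows the paper's proof essentially step for step. Both arguments establish continuity of $\phi_a$ on $[0,\pi]$, take $\phi_a(0)=0$ and $\phi_a(\pi)=\mathscr{R}(\operatorname{Li}_2(a))-\operatorname{Li}_2(-a)$ as a limit of the closed form, show $\phi_a'(b)=\operatorname{Arg}(1+ae^{ib})>0$ on $(0,\pi)$, and conclude by the intermediate value theorem plus strict monotonicity; your extra identity $\phi_a'(b)=b-\arg(a+e^{ib})$ is a nice touch, since it gets the sign of $\phi_a'$ directly from the definition of $I(a,b)$ without the dilogarithm closed form.
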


\begin{proof}[Proof of Lemma \ref{lem.2}]\normalfont

In the upcoming proof, we proceed in several steps. First of all, we should examine the regularity of \(\phi_a\).

For fixed \(a>0\), the function \(b\mapsto I(a,b)\) is continuous on \([0,\pi]\), because
its integrand is continuous on \([0,\pi]\)
\begin{equation}
\lim_{\beta\to 0^+}\arctan\!\left(\frac{a+\cos\left(\beta\right)}{\sin\left(\beta\right)}\right)=\frac{\pi}{2},
\end{equation}
and
\begin{equation}
\lim_{\beta\to \pi^-}\arctan\!\left(\frac{a+\cos\left(\beta\right)}{\sin\left(\beta\right)}\right)
=
\begin{cases}
-\frac{\pi}{2}, & 0<a<1,\\[4pt]
0, & a=1,\\[4pt]
\frac{\pi}{2}, & a>1.
\end{cases}
\end{equation}
Therefore, the function \(\phi_a\) is continuous on \([0,\pi]\). Additionally, we can say that it is \(C^1\) on \((0,\pi)\) via the fundamental theorem of calculus.

Next, differentiating the auxiliary integral \(I(a,b)\) with respect to the upper limit \(b\), we obtain
\begin{equation}
\frac{\partial I}{\partial b}(a,b)
=
\arctan\!\left(\frac{a+\cos \left(b\right)}{\sin \left(b\right)}\right),
\qquad 0<b<\pi.
\end{equation}
Consequently,
\begin{equation}
\phi_a'(b)
= 
\arctan\!\left(\frac{a+\cos \left(b\right)}{\sin \left(b\right)}\right)-\frac{\pi}{2}+b.
\end{equation}

Now we shall rewrite the derivative \(\phi_a'(b)\) in a more transparent way that immediately implies strict monotonicity of \(\phi_a(b)\). Differentiating the relation (\ref{eq.52}) with respect to \(b\) provides us
\begin{equation}
\phi_a'(b)
=
\frac{d}{db}\mathscr{R}\!\bigl(\operatorname{Li}_2(-ae^{ib})\bigr).
\end{equation}
Since \(\frac{d}{db}(-ae^{ib})=-iae^{ib}\), the chain rule and the derivative formula for \(\operatorname{Li}_2\) give us
\begin{equation}
\frac{d}{db}\operatorname{Li}_2(-ae^{ib})
=
-\frac{\operatorname{Log}(1+ae^{ib})}{-ae^{ib}}\cdot (-iae^{ib})
=
-i\,\operatorname{Log}(1+ae^{ib}).
\end{equation}
Therefore
\begin{equation}
\phi_a'(b)
=
\mathscr{R}\!\bigl(-i\,\operatorname{Log}(1+ae^{ib})\bigr).
\end{equation}
Moreover, for any complex number \(w=u+iv\), we have \(-iw=v-iu\), so \(\mathscr{R}(-iw)=v=\Im(w)\). Hence
\begin{equation}
\phi_a'(b)
=
\mathscr{I}\!\bigl(\operatorname{Log}(1+ae^{ib})\bigr).
\end{equation}
Since \(\operatorname{Log}\) represents the principal logarithm, its imaginary part is the principal argument. Thus
\begin{equation}\label{eq.62}
\phi_a'(b)=\operatorname{Arg}(1+ae^{ib}).
\end{equation}

Next, for \(0<b<\pi\), we have \(\sin b>0\), so the imaginary part of \(1+ae^{ib}\) satisfies \(\mathscr{I}(1+ae^{ib})=a\sin \left(b\right)>0\). This implies that \(1+ae^{ib}\) lies strictly in the open upper half-plane. Therefore, its principal
argument satisfies
\begin{equation}
0<\operatorname{Arg}(1+ae^{ib})<\pi.
\end{equation}
Using the relation (\ref{eq.62}), we get
\begin{equation}
\phi_a'(b)>0,
\qquad 0<b<\pi.
\end{equation}
Thus \(\phi_a\) is strictly increasing on \((0,\pi)\).

In the next step, we should compute the endpoint values of \(\phi_a\left(b\right)\). At \(b=0\), we have \(I(a,0)=0\), so \(\phi_a(0)=0\).

We treat the other endpoint as a limit. Since the auxiliary integral \(I(a,b)\) is continuous up to \(b=\pi\), then the function \(\phi_a\) is also continuous up to \(b=\pi\). Moreover, from the relation (\ref{eq.52}), we get
\begin{equation}
\phi_a(\pi)
=
\lim_{b\longrightarrow\pi}\phi_a(b)
=
-\operatorname{Li}_2(-a)+\lim_{b\longrightarrow\pi}\mathscr{R}\!\bigl(\operatorname{Li}_2(-ae^{ib})\bigr).
\end{equation}
As \(b\longrightarrow\pi\), then \(-ae^{ib}\to a\) from one side of the branch cut, and the real part admits the limiting value \(\lim_{b\rightarrow\pi}\mathscr{R}\!\bigl(\operatorname{Li}_2(-ae^{ib})\bigr)=\mathscr{R}\!\bigl(\operatorname{Li}_2(a)\bigr)\). Hence
\begin{equation}
\phi_a(\pi)=\mathscr{R}\!\bigl(\operatorname{Li}_2(a)\bigr)-\operatorname{Li}_2(-a).
\end{equation}

Furthermore, since the function \(\phi_a\) is continuous on \([0,\pi]\), strictly increasing on \((0,\pi)\), and meets the endpoint conditions
\begin{equation}
\phi_a(0)=0,
\qquad
\phi_a(\pi)=\mathscr{R}\!\bigl(\operatorname{Li}_2(a)\bigr)-\operatorname{Li}_2(-a),
\end{equation}
the intermediate value theorem implies that the equation
\begin{equation}
\phi_a(b)=\psi(a)
\end{equation}
has a solution \(b\in(0,\pi)\) if and only if \(0<\psi(a)<\phi_a(\pi)\). Because \(\phi_a\) is strictly increasing, such a solution is automatically unique. 

This completes the proof of Lemma \ref{lem.2}. 
\end{proof}

In the above considerations, we have established the admissibility criterion/set, so we can proceed with the rest of the proof of Theorem \ref{thm.1}. Assume that \(a>0\) satisfies the admissibility condition
\begin{equation}
0<\psi(a)<\phi_a(\pi)
=
\mathscr{R}\!\bigl(\operatorname{Li}_2(a)\bigr)-\operatorname{Li}_2(-a).
\end{equation}
Using the result of Lemma \ref{lem.2}, there exists a unique \(b(a)\in(0,\pi)\) such that \(\phi_a\bigl(b(a)\bigr)=\psi(a)\). Applying the definition of \(\phi_a\), this equality gives us
\begin{equation}\label{eq.73}
I(a,b(a))-\frac{\pi b(a)}{2}+\frac{b(a)^2}{2}
=
\mathscr{I}\!\bigl(\operatorname{Li}_2(1+ia)\bigr).
\end{equation}
Now substitute (\ref{eq.73}) into the formula (\ref{eq.53}), we get
\begin{equation}\label{eq.75}
\operatorname{Ti}_2(a)
=
\arctan(a)\operatorname{Log}(a)
+
\mathscr{I}\!\bigl(\operatorname{Li}_2(1+ia)\bigr)
-\frac{\pi}{4}\operatorname{Log}(1+a^2).
\end{equation}
In other words
\begin{equation}
\operatorname{Ti}_2(a)
=
\arctan(a)\operatorname{Log}(a)
+
I(a,b(a))
-\frac{\pi b(a)}{2}
+\frac{b(a)^2}{2}
-\frac{\pi}{4}\operatorname{Log}(1+a^2).
\end{equation}
This completes the proof of Theorem \ref{thm.1}

The remaining part of this section contains the proof of Corollary \ref{cor.1}. Let us now specialize the result of Theorem \ref{thm.1} to \(a=1\).

For \(a=1\), the auxiliary integral simplifies considerably
\begin{equation}
I(1,b)=\int_0^b \arctan\!\left(\frac{1+\cos\left(\beta\right)}{\sin\left(\beta\right)}\right)\,d\beta.
\end{equation}
For \(0<\beta<\pi\), we have \(\sin\beta>0\) and we can rewrite the argument of \(\arctan\) as
\begin{equation}
\frac{1+\cos\left(\beta\right)}{\sin\left(\beta\right)}
=
\frac{2\cos^2(\beta/2)}{2\sin(\beta/2)\cos(\beta/2)}
=
\cot\!\left(\frac{\beta}{2}\right).
\end{equation}
Since \(0<\beta<\pi\), it directly follows that \(0<\frac{\beta}{2}<\frac{\pi}{2}\), hence
\begin{equation}
0<\frac{\pi}{2}-\frac{\beta}{2}<\frac{\pi}{2}.
\end{equation}
In other words, the term \(\frac{\pi}{2}-\frac{\beta}{2}\) lies in the principal range of \(\arctan\), and
\begin{equation}
\arctan\!\left(\cot\!\left(\frac{\beta}{2}\right)\right)
=
\frac{\pi}{2}-\frac{\beta}{2}.
\end{equation}
Thus
\begin{equation}\label{eq.80}
I(1,b)
=
\int_0^b \left(\frac{\pi}{2}-\frac{\beta}{2}\right)\,d\beta
=
\frac{\pi b}{2}-\frac{b^2}{4}.
\end{equation}
Consequently, putting the identity (\ref{eq.80}) back into \(\phi_1\left
(b\right)\) gives us
\begin{equation}
\phi_1(b)
=
I(1,b)-\frac{\pi b}{2}+\frac{b^2}{2}
=
\left(\frac{\pi b}{2}-\frac{b^2}{4}\right)-\frac{\pi b}{2}+\frac{b^2}{2}
=
\frac{b^2}{4}.
\end{equation}
Specifically, we have the following values at the endpoints
\begin{equation}
\phi_1(0)=0,
\qquad
\phi_1(\pi)=\frac{\pi^2}{4}.
\tag{4.15}
\end{equation}

It remains to verify the admissibility inequality \(0<\psi(1)<\phi_1(\pi)\). Recall that Catalan’s constant is defined via the following infinite series
\begin{equation}
G=\operatorname{Ti}_2(1)=\sum_{n=0}^\infty \frac{(-1)^n}{(2n+1)^2}.
\end{equation}
Since this is an alternating series with a positive first term, we have \(G>0\).
Moreover,
\begin{equation}
G
<
\sum_{n=0}^\infty \frac{1}{(2n+1)^2},
\end{equation}
because the right-hand side is obtained by removing the alternating signs and therefore
adding only positive contributions.

Now reindex this series as
\begin{equation}
\sum_{n=0}^\infty \frac{1}{(2n+1)^2}
=
\sum_{n=1}^\infty \frac{1}{(2n-1)^2}.
\end{equation}
The series of reciprocals of odd squares is obtained by subtracting the even-square part from the zeta function evaluated at \(z=2\),
\begin{equation}
\sum_{n=1}^\infty \frac{1}{(2n-1)^2}
=
\sum_{m=1}^\infty \frac{1}{m^2}
-
\sum_{n=1}^\infty \frac{1}{(2n)^2}.
\end{equation}
Since
\begin{equation}
\sum_{m=1}^\infty \frac{1}{m^2}=\frac{\pi^2}{6},
\qquad
\sum_{n=1}^\infty \frac{1}{(2n)^2}
=
\frac{1}{4}\sum_{n=1}^\infty \frac{1}{n^2}
=
\frac{\pi^2}{24},
\end{equation}
it follows that
\begin{equation}
\sum_{n=0}^\infty \frac{1}{(2n+1)^2}
=
\frac{\pi^2}{6}-\frac{\pi^2}{24}
=
\frac{\pi^2}{8}.
\end{equation}
This provides the estimation
\begin{equation}\label{eq.88}
0<G<\frac{\pi^2}{8}.
\end{equation}

Utilizing the relation (\ref{eq.75}) evaluated at \(a=1\) gives us
\begin{equation}
G=\operatorname{Ti}_2(1)=\mathscr{I}\!\bigl(\operatorname{Li}_2(1+i)\bigr)-\frac{\pi}{4}\operatorname{Log}\left(2\right).
\end{equation}
Therefore
\begin{equation}\label{eq.90}
\psi(1)=\mathscr{I}\!\bigl(\operatorname{Li}_2(1+i)\bigr)=G+\frac{\pi}{4}\operatorname{Log}\left(2\right).
\end{equation}
Since \(\operatorname{Log}\left(2\right)<\pi/2\), we have \(\frac{\pi}{4}\operatorname{Log}\left(2\right)<\frac{\pi^2}{8}\).
Combining this with (\ref{eq.88}), we obtain the desired admissibility condition
\begin{equation}
0<\psi(1)<\frac{\pi^2}{8}+\frac{\pi^2}{8}=\frac{\pi^2}{4}=\phi_1(\pi).
\tag{4.18}
\end{equation}
Therefore, the value \(a=1\) can be considered admissible, i.e. \(a\in\mathscr{A}\).

By Lemma \ref{lem.2}, there exists a unique \(b(1)\in(0,\pi)\) such that \(\phi_1(b(1))=\psi(1)\). Since \(\phi_1(b)=b^2/4\), this gives us
\begin{equation}
\frac{b(1)^2}{4}=\psi(1)\implies b(1)=2\sqrt{\psi(1)}.
\end{equation}
Substituting the relation (\ref{eq.90}) into this identity yields
\begin{equation}
b(1)=\sqrt{4G+\pi\operatorname{Log}\left(2\right)}.
\end{equation}
Finally, the result of Theorem \ref{thm.1} with \(a=1\) provides
\begin{equation}
G
=
\arctan(1)\operatorname{Log}(1)+\phi_1(b(1))-\frac{\pi}{4}\operatorname{Log}\left(2\right)
=
\phi_1(b(1))-\frac{\pi}{4}\operatorname{Log}\left(2\right).
\end{equation}
Since \(\phi_1(b)=b^2/4\), we can finally conclude that
\begin{equation}
G=\frac{b(1)^2}{4}-\frac{\pi}{4}\operatorname{Log}\left(2\right).
\end{equation}
This completes the proof of Corollary \ref{cor.1} and thus finishes this first section.

\section{Proof of Theorem \ref{thm.2} - Generalized inverse tangent integral}

In this section, we present the complete proof of Theorem \ref{thm.2}.

Let us start with Euler’s product formula
\begin{equation}\label{eq:32}
\operatorname{sin}\left(z\right)=z\prod_{k=1}^{\infty}\left(1-\frac{z^2}{\left(k\pi\right)^2}\right),
\end{equation}
where $z\in\mathbb{C}$. For any $z\neq \pi\mathbb{Z}$, we take the logarithm of (\ref{eq:32})
\begin{equation}\label{eq:33}
\operatorname{log}\left(\operatorname{sin}\left(z\right)\right)=\operatorname{log}\left(z\prod_{k=1}^{\infty}\left(1-\frac{z^2}{\left(k\pi\right)^2}\right)\right)=\operatorname{log}\left(z\right)+\sum_{k=1}^{\infty}\operatorname{log}\left(1-\frac{z^2}{\left(k\pi\right)^2}\right).
\end{equation}

\vspace{0.1cm}
Notice that on any compact set $K\in\mathbb{C}\setminus\pi\mathbb{Z}$, the infinite series (\ref{eq:33}) converges uniformly, since
\begin{equation}\label{eq:34}
\operatorname{log}\left(1-\frac{z^2}{\left(k\pi\right)^2}\right)\sim\operatorname{O}\left(\frac{1}{k^2}\right)
\end{equation}
uniformly on $K$. Hence, we can differentiate the relation (\ref{eq:33}) termwise in $K$. This provides us
\begin{equation}\label{eq:35}
\frac{d}{dz}\operatorname{log}\left(\operatorname{sin}\left(z\right)\right)=\frac{1}{z}+\sum_{k=1}^{\infty}\frac{d}{dz}\operatorname{log}\left(1-\frac{z^2}{\left(k\pi\right)^2}\right)=\frac{1}{z}-\sum_{k=1}^{\infty}\frac{2z}{\left(k\pi^2\right)-z^2}.
\end{equation}

Using the identity $\frac{d}{dz}\operatorname{log}\left(\operatorname{sin}\left(z\right)\right)=\operatorname{cotg}\left(z\right)$, we obtain the following expansion
\begin{equation}\label{eq:36}
\operatorname{cotg}\left(z\right)=\frac{1}{z}-\sum_{k=1}^{\infty}\frac{2z}{\left(k\pi^2\right)-z^2},
\end{equation}
where $z\neq\pi\mathbb{Z}$. Moreover, it is worth noting that on any compact set that does not include $\pi\mathbb{Z}$, the expansion (\ref{eq:36}) converges uniformly. This implies that we can substitute the variable $z$ and then take the real and imaginary parts separately.

Next, for $x\in\mathbb{R}$ and $\alpha\neq\pi\mathbb{Z}$, let us define the following function
\begin{equation}\label{eq:37}
\mathscr{F}\left(x\right):=\frac{\operatorname{sin}\left(\alpha+ix\right)}{\operatorname{sin}\left(\alpha\right)},
\end{equation}
Since $x$ is an arbitrary real number, the expression $\operatorname{sin}\left(\alpha+ix\right)$ is non-zero, because the zeros of $\operatorname{sin}\left(z\right)$ lie only on the real axis $z=\pi n$. Also $\operatorname{sin}\left(\alpha\right)\neq 0$ because $\alpha\notin\pi\mathbb{Z}$. 

This enables us to choose the argument $\operatorname{Arg}\left(\mathscr{F}\left(x\right)\right)$ as a continuous real function with a normalization condition $\operatorname{Arg}\left(\mathscr{F}\left(0\right)\right)=0$.

Using the classical identity $\operatorname{sin}\left(\alpha+ix\right)=\operatorname{sin}\left(\alpha\right)\operatorname{cosh}\left(x\right)+i\operatorname{cos}\left(\alpha\right)\operatorname{sinh}\left(x\right)$, we have
\begin{equation}\label{eq:38}
\mathscr{F}\left(x\right)=\operatorname{cosh}\left(x\right)+i\operatorname{cotg}\left(\alpha\right)\operatorname{sinh}\left(x\right).
\end{equation}

Since $\operatorname{cosh}\left(x\right)>0$ for all real $x$, the argument boils down to 
\begin{equation}\label{eq:39}
\operatorname{Arg}\left(\mathscr{F}\left(x\right)\right)=\operatorname{arctan}\left(\operatorname{cotg}\left(\alpha\right)\operatorname{tanh}\left(x\right)\right).
\end{equation}

At this point, we would like to differentiate the argument with respect to $x$ in order to express it as the imaginary part of $\operatorname{cotg}\left(\alpha+ix\right)$.

Let $I\subset\mathbb{R}$ be an interval, and let $G:I\longrightarrow\mathbb{C}\setminus\{0\}$ be a $C^1$ function such that $G\left(x\right)\neq 0$ for all points $x\in I$. Fix a point $x_0\in I$, since $G$ is continuous and $G\left(x_0\right)\neq 0$, there exists a subinterval $J\subset I$ containing $x_0$ such that $G\left(J\right)$ avoids a ray from the origin. In other words, it avoids a point on the unit circle after normalization. On this subinterval $J$, one can choose the so-called continuous branch of the argument 
\begin{equation}\label{eq:40}
\theta:J\longrightarrow\mathbb{R},
\end{equation}
such that
\begin{equation}\label{eq:41}
G\left(x\right)=\vert G\left(x\right)\vert e^{i\theta\left(x\right)}.
\end{equation}

The term $\theta$ represents what we mean by a continuous choice of $\operatorname{Arg}\left(\mathscr{F}\left(x\right)\right)$ on $J$, not necessarily the principal one.

We first prove the following proposition.
\begin{prop}\label{prop.1}
\textit{For all $x\in J$, we have}
\begin{equation}\label{eq:42}
\theta’\left(x\right)=\mathscr{Im}\left(\frac{G’\left(x\right)}{G\left(x\right)}\right)
\end{equation}
\end{prop}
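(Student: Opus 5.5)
The plan is to differentiate the polar representation \eqref{eq:41} directly, after first checking that $\theta$ is differentiable. Since $G$ is $C^1$ and nonvanishing on $J$, the modulus $r(x):=\vert G(x)\vert=\sqrt{G(x)\overline{G(x)}}$ is $C^1$ and strictly positive on $J$.

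The main obstacle is the regularity of $\theta$, because a continuous branch of the argument is a priori only continuous. On $J$ the image $G(J)$ avoids some ray $\{te^{i\omega}:t\ge 0\}$. Hence the function $\Lambda(w):=\log\vert w\vert+i\arg_\omega(w)$, with $\arg_\omega$ taking values in $(\omega,\omega+2\pi)$, is a holomorphic branch of the logarithm on $\mathbb{C}$ minus that ray. The composition $\Lambda\circ G$ is then $C^1$ on $J$, and its imaginary part $\arg_\omega(G(x))$ is continuous. It satisfies $e^{i\arg_\omega(G(x))}=e^{i\theta(x)}$, so $\theta(x)-\arg_\omega(G(x))$ is a continuous function on the interval $J$ taking values in $2\pi\mathbb{Z}$. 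By connectedness it is constant. Therefore $\theta=\mathscr{Im}(\Lambda\circ G)+\text{const}$ is $C^1$.

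With regularity settled, we differentiate $G=re^{i\theta}$ to get
\begin{equation}
\nonumber
G'(x)=\bigl(r'(x)+i\,r(x)\theta'(x)\bigr)e^{i\theta(x)}.
\end{equation}
Dividing by $G(x)=r(x)e^{i\theta(x)}\neq 0$ gives
\begin{equation}
\nonumber
\frac{G'(x)}{G(x)}=\frac{r'(x)}{r(x)}+i\,\theta'(x).
\end{equation}
Here $r'/r$ and $\theta'$ are real, so taking imaginary parts yields \eqref{eq:42}.

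As a cross-check, one can reach the same formula through the chain rule: $(\Lambda\circ G)'=\Lambda'(G)\,G'=G'/G$, since $\Lambda'(w)=1/w$ for any holomorphic branch of the logarithm. Taking imaginary parts again gives $\theta'=\mathscr{Im}(G'/G)$.

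Finally, since every point $x_0\in I$ has such a neighbourhood $J$, the identity holds pointwise on all of $I$. Applied to $G=\mathscr{F}$, this is exactly what the subsequent argument needs in order to relate $\frac{d}{dx}\operatorname{arctan}\bigl(\operatorname{cotg}(\alpha)\operatorname{tanh}(x)\bigr)$ to $\mathscr{Im}\bigl(i\operatorname{cotg}(\alpha+ix)\bigr)$.
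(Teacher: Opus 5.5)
Your proof is correct and follows the paper's route: write $G=Re^{i\theta}$, differentiate, divide by $G(x)\neq 0$, and take imaginary parts. The one difference is that you justify the $C^1$ regularity of $\theta$, using a holomorphic branch of the logarithm on the complement of the omitted ray together with a connectedness argument. The paper simply asserts that $R$ and $\theta$ are $C^1$ because $G$ is $C^1$ and $R>0$.
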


\begin{proof}[Proof of Proposition~\ref{prop.1}]\normalfont
Since $G\left(x\right)$ is a non-zero function on $I$, we can write
\begin{equation}\label{eq:43}
G\left(x\right)=R\left(x\right)e^{i\theta\left(x\right)},
\end{equation}
where $R\left(x\right):=\vert G\left(x\right)\vert>0$. Since $G\in C^1$ and $R>0$, then $R,\theta$ are also $C^1$ on $J$.

Allow us to differentiate (\ref{eq:43})
\begin{equation}\label{eq:44}
G’\left(x\right)=R’\left(x\right)e^{i\theta\left(x\right)}+R\left(x\right)i\theta’\left(x\right)e^{i\theta\left(x\right)}=e^{i\theta\left(x\right)}\left(R’\left(x\right)+R\left(x\right)i\theta’\left(x\right)\right).
\end{equation}
Thus
\begin{equation}\label{eq:45}
\frac{G’\left(x\right)}{G\left(x\right)}=\frac{e^{i\theta\left(x\right)}\left(R’\left(x\right)+R\left(x\right)i\theta’\left(x\right)\right)}{R\left(x\right)e^{i\theta\left(x\right)}}=\frac{R’\left(x\right)}{R\left(x\right)}+i\theta’\left(x\right).
\end{equation}
At this point, it just remains to take imaginary parts of both sides of (\ref{eq:45})
\begin{equation}\label{eq:46}
\mathscr{Im}\left(\frac{G’\left(x\right)}{G\left(x\right)}\right)=\theta’\left(x\right).
\end{equation}
This completes the proof of Proposition \ref{prop.1}.
\end{proof}

From a topological perspective, it might be reasonable to ask why we can write $G\left(x\right)=\vert G\left(x\right)\vert e^{i\theta\left(x\right)}$ with $\theta$ continuous? To be clear, this is a classical topological fact. The map $u\left(x\right):=\frac{G\left(x\right)}{\vert G\left(x\right)\vert}$ is a continuous function from the subinterval $J$ to the unit circle $S^1$. Since $J$ is contractible, every continuous map $J\longrightarrow S^1$ admits a continuous lift to $\mathbb{R}$ through the covering map $t\mapsto e^{it}$. That lift precisely corresponds to $\theta\left(x\right)$.

Put another way, the argument $\theta\left(x\right)$ exists locally (also globally, since $I$ is simply connected), provided that $G$ is non-vanishing.

Now we have everything that we need to apply the identity (\ref{eq:42}) on $\theta\left(x\right):=\operatorname{Arg}\left(\mathscr{F}\left(x\right)\right)$. First, let us differentiate 
\begin{equation}\label{eq:47}
\frac{d}{dx}\operatorname{sin}\left(\alpha+ix\right)=i\operatorname{cos}\left(\alpha+ix\right).
\end{equation}
Thus
\begin{equation}\label{eq:48}
\frac{\mathscr{F}’\left(x\right)}{\mathscr{F}\left(x\right)}=\frac{i\operatorname{cos}\left(\alpha+ix\right)}{\operatorname{sin}\left(\alpha+ix\right)}=i\operatorname{cotg}\left(\alpha+ix\right).
\end{equation}
Therefore,
\begin{equation}\label{eq:49}
\frac{d}{dx}\operatorname{Arg}\left(\mathscr{F}\left(x\right)\right)=\mathscr{Im}\left(i\operatorname{cotg}\left(\alpha+ix\right)\right)=\mathscr{Re}\left(\operatorname{cotg}\left(\alpha+ix\right)\right).
\end{equation}

Recall the expansion (\ref{eq:36}) of $\operatorname{cotg}\left(z\right)$ and substitute $z=\alpha+ix$. We obtain
\begin{equation}\label{eq:50}
\operatorname{cotg}\left(\alpha+ix\right)=\frac{1}{\alpha+ix}+\sum_{k=1}^{\infty}\frac{2\left(\alpha+ix\right)}{\left(\alpha+ix\right)^2-\left(k\pi\right)^2}.
\end{equation}

Notice that the distance to the poles is positive, since $\mathscr{Im}\left(\alpha+ix\right)=x$ for $x\neq 0$ and even for $x=0$ we are away from the poles because $\alpha\notin\pi\mathbb{Z}$. This justifies the fact that we can take the real part of $\operatorname{cotg}\left(\alpha+ix\right)$ termwise on any strip $x\in[0,A]$, because the series (\ref{eq:50}) converges uniformly on such a strip.

Therefore, combining (\ref{eq:49}) and (\ref{eq:50}), we get
\begin{equation}\label{eq:51}
\frac{d}{dx}\operatorname{Arg}\left(\mathscr{F}\left(x\right)\right)=\mathscr{Re}\left(\frac{1}{\alpha+ix}\right)+\sum_{k=1}^{\infty}\mathscr{Re}\left(\frac{2\left(\alpha+ix\right)}{\left(\alpha+ix\right)^2-\left(k\pi\right)^2}\right).
\end{equation}
The first term gives us
\begin{equation}\label{eq:52}
\mathscr{Re}\left(\frac{1}{\alpha+ix}\right)=\mathscr{Re}\left({\frac{\alpha-ix}{\alpha^2+x^2}}\right)=\frac{\alpha}{\alpha^2+x^2}.
\end{equation}
Thus, integrating both sides of (\ref{eq:52}) from $0$ to $A$ yields
\begin{equation}\label{eq:53}
\int_0^A\mathscr{Re}\left(\frac{1}{\alpha+iw}\right)\operatorname{dw}=\int_0^A\frac{\alpha}{\alpha^2+w^2}\operatorname{dw}=\operatorname{arctan}\left(\frac{x}{\alpha}\right).
\end{equation}

In order to appropriately deal with the summand term in (\ref{eq:51}), we shall utilize a completely different strategy. Simply put, we will show that each summand is a derivative of the arctangent function with a specific argument. 

Fix $k\geq 1$ and define 
\begin{equation}\label{eq:54}
\Xi_k\left(x\right):=\operatorname{arctan}\left(\frac{2\alpha x}{x^2+\left(k\pi\right)^2-\alpha^2}\right).
\end{equation}
Notice that for $x=0$, we have $\Xi_k\left(0\right)=0$ and that $\Xi_k$ is a continuous function on $[0,\infty)$. This leads us to the following proposition.
\begin{prop}\label{prop.2}
\textit{Let $k\geq 1$ be a fixed number, then}
\begin{equation}\label{eq:55}
\frac{d}{dx}\Xi_k\left(x\right)=\mathscr{Re}\left(\frac{2\left(\alpha+ix\right)}{\left(\alpha+ix\right)^2-\left(k\pi\right)^2}\right),
\end{equation}
\textit{for all $x\geq 0$.}
\end{prop}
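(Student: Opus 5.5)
The plan is a direct computation. I will evaluate both sides of \eqref{eq:55} as explicit real rational functions of $x$ and compare them. Throughout I write $c:=k\pi$ and $z:=\alpha+ix$.

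\textbf{Step 1 (right-hand side).} Split the summand into partial fractions:
\[
\frac{2z}{z^2-c^2}=\frac{1}{z-c}+\frac{1}{z+c}.
\]
Taking real parts exactly as in \eqref{eq:52} gives
\[
\mathscr{Re}\left(\frac{2z}{z^2-c^2}\right)=\frac{\alpha-c}{(\alpha-c)^2+x^2}+\frac{\alpha+c}{(\alpha+c)^2+x^2}.
\]
This expression is defined for every $x\geq 0$, because $\alpha\pm c\neq 0$ when $\alpha\notin\pi\mathbb{Z}$.

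\textbf{Step 2 (left-hand side).} Put $u(x):=2\alpha x/(x^2+c^2-\alpha^2)$. Then
\[
\Xi_k'(x)=\frac{u'(x)}{1+u(x)^2}=\frac{2\alpha\,(c^2-\alpha^2-x^2)}{(x^2+c^2-\alpha^2)^2+4\alpha^2x^2}.
\]
The denominator factors as $\bigl(x^2+(c-\alpha)^2\bigr)\bigl(x^2+(c+\alpha)^2\bigr)$. This is the same product of denominators that appears in Step 1, so the two sides can be compared after putting them over this common denominator.

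\textbf{Step 3 (matching).} Over the common denominator, the numerator from Step 1 is
\[
(\alpha-c)\bigl(x^2+(\alpha+c)^2\bigr)+(\alpha+c)\bigl(x^2+(\alpha-c)^2\bigr)=2\alpha\,(x^2+\alpha^2-c^2).
\]
This step is the main obstacle: this numerator has to be matched against the numerator $2\alpha(c^2-\alpha^2-x^2)$ from Step 2, and the two differ exactly by a sign. Before comparing, I will re-derive the summand from the cotangent expansion \eqref{eq:36}, where the $k$-th term reads $-2z/(c^2-z^2)=+2z/(z^2-c^2)$, and keep track of every sign from \eqref{eq:36} through \eqref{eq:50} and \eqref{eq:51}. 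The stated identity \eqref{eq:55} holds exactly when the sign convention for the summand in \eqref{eq:50} agrees with the sign in $\Xi_k'$. If the direct check leaves a global minus sign, the fix is to orient the summand as $2z/(c^2-z^2)$, so that the $k$-th term of \eqref{eq:51} equals $\Xi_k'$. I expect most of the care to go into this bookkeeping.

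\textbf{Step 4 (consequence).} Once the derivative identity is in place, integrate from $0$ to $x$ and use $\Xi_k(0)=0$. This identifies each summand of \eqref{eq:51} as the derivative of $\Xi_k$. A secondary point is the branch of $\arctan$: the argument $u(x)$ blows up at $x^2=\alpha^2-c^2$ when $\alpha>c$. For this case I will check that $\Xi_k'$, computed by the formula above, is the derivative of the continuous branch. Equivalently, I will verify the formula on each interval where $x^2+c^2-\alpha^2\neq 0$, where the chain rule applies directly.
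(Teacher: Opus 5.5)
Your computations in Steps 1--3 are correct, but they show that \eqref{eq:55} is false as stated. No sign bookkeeping in \eqref{eq:36}--\eqref{eq:51} can change that, because the right-hand side of \eqref{eq:55} is an explicit function of $x$ that does not depend on how the cotangent expansion is written. With $c=k\pi$ and $z=\alpha+ix$, your computation gives
\[
\Xi_k'(x)=\frac{2\alpha\,(c^2-\alpha^2-x^2)}{\bigl(x^2+(c-\alpha)^2\bigr)\bigl(x^2+(c+\alpha)^2\bigr)}=-\mathscr{Re}\left(\frac{2z}{z^2-c^2}\right)=\mathscr{Re}\left(\frac{2z}{c^2-z^2}\right).
\]
So the two sides of \eqref{eq:55} coincide only at the single point $x=\sqrt{c^2-\alpha^2}$ (when $c>\alpha$), where both vanish. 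For instance, at $x=0$ with $0<\alpha<\pi$, the left side is $2\alpha/(c^2-\alpha^2)>0$ and the right side is $2\alpha/(\alpha^2-c^2)<0$. Step 3 therefore cannot be completed, and you should state the conclusion plainly: the proposition holds with the opposite sign. The paper reaches the stated sign only through a slip in \eqref{eq:61}. Writing $w=2z/(c^2-z^2)$, one has $\mathscr{Im}(-iw)=-\mathscr{Re}(w)$, not $\mathscr{Re}(w)$. With that corrected, \eqref{eq:58} together with $\operatorname{Arg}(\nu_k)=-\Xi_k$ from \eqref{eq:60} gives exactly your formula.

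Your proposed repair is in the wrong place. The expansion \eqref{eq:36}, its specialization \eqref{eq:50}, and the termwise real part \eqref{eq:51} are all correct. The $k$-th term of \eqref{eq:51} equals $-\Xi_k'(x)$ however you orient the fraction, so rewriting it as $\mathscr{Re}\bigl(2z/(c^2-z^2)\bigr)$ without a compensating minus sign would make \eqref{eq:51} false. The sign must be flipped in \eqref{eq:55} itself. Note that \eqref{eq:63}, with its $-\sum_k\Xi_k'$, is what the corrected identity yields; the paper's two sign errors cancel, so Theorem~\ref{thm.2} survives.

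Your Step 4 concern is valid, and the paper misses it when it asserts that $\Xi_k$ is continuous on $[0,\infty)$. If $\alpha>k\pi$, which Theorem~\ref{thm.2} allows, the principal-branch $\Xi_k$ jumps by $\pi$ at $x_0=\sqrt{\alpha^2-(k\pi)^2}$. Even the corrected identity then holds only on $[0,\infty)\setminus\{x_0\}$ for that branch, or everywhere for the continuous branch. For $0<\alpha<\pi$ the issue does not arise.
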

\begin{proof}[Proof of Proposition~\ref{prop.2}]\normalfont
Consider a complex function $\nu_k\left(x\right):=\left(k\pi\right)^2-\left(\alpha+ix\right)^2$,
then
\begin{equation}\label{eq:56}
\frac{d}{dx}\nu_k\left(x\right)=-2i\left(\alpha+ix\right).
\end{equation}
Hence, taking the logarithm of (\ref{eq:56}) and differentiating with respect to $x$ yields
\begin{equation}\label{eq:57}
\frac{d}{dx}\operatorname{log}\left(\nu_k\left(x\right)\right)=\frac{\nu’_k\left(x\right)}{\nu_k\left(x\right)}=\frac{-2i\left(\alpha+ix\right)}{\left(k\pi\right)^2-\left(\alpha+ix\right)^2}
\end{equation}

At this point, let us take the imaginary part of (\ref{eq:57}) and use the result of Proposition \ref{prop.1} with identification $G=\nu$. We get
\begin{equation}\label{eq:58}
\frac{d}{dx}\operatorname{Arg}\left(\nu_k\left(x\right)\right)=\mathscr{Im}\left(\frac{-2i\left(\alpha+ix\right)}{\left(k\pi\right)^2-\left(\alpha+ix\right)^2}\right).
\end{equation}
Simultaneously
\begin{equation}\label{eq:59}
\nu_k\left(x\right)=\left(x^2+\left(k\pi\right)^2-\alpha^2\right)-2i\alpha x.
\end{equation}

Hence, the argument of $\nu_k\left(x\right)$ must be
\begin{equation}\label{eq:60}
\operatorname{Arg}\left(\nu_k\left(x\right)\right)-\operatorname{arctan}\left(\frac{2\alpha x}{x^2+\left(k\pi\right)^2-\alpha^2}\right)=-\Xi_k\left(x\right)
\end{equation}
with the continuous choice $\operatorname{Arg}\left(\nu_k\left(0\right)\right)=0$.

Considering the right-hand side of (\ref{eq:58}), we can rewrite it as
\begin{equation}\label{eq:61}
\mathscr{Im}\left(\frac{-2i\left(\alpha+ix\right)}{\left(k\pi\right)^2-\left(\alpha+ix\right)^2}\right)=\mathscr{Re}\left(\frac{2\left(\alpha+ix\right)}{\left(k\pi\right)^2-\left(\alpha+ix\right)^2}\right)=-\mathscr{Re}\left(\frac{2\left(\alpha+ix\right)}{\left(\alpha+ix\right)^2-\left(k\pi\right)^2}\right).
\end{equation}
Hence
\begin{equation}\label{eq:62}
\frac{d}{dx}\Xi_k\left(x\right)=\mathscr{Re}\left(\frac{2\left(\alpha+ix\right)}{\left(\alpha+ix\right)^2-\left(k\pi\right)^2}\right).
\end{equation}

This completes the proof of Proposition \ref{prop.2}.
\end{proof}

Now it remains to insert the identity (\ref{eq:55}) into (\ref{eq:51}). We obtain
\begin{equation}\label{eq:63}
\frac{d}{dx}\operatorname{Arg}\left(\mathscr{F}\left(x\right)\right)=\frac{\alpha}{\alpha^2+x^2}-\sum_{k=1}^{\infty}\frac{d}{dx}\Xi_k\left(x\right).
\end{equation}
Let us integrate (\ref{eq:63}) from $0$ to $x\geq 0$. Since the infinite series in (\ref{eq:63}) is uniformly convergent on $[0,A]$ with $A>0$ (it decays as $\frac{1}{k^2}$), then for any fixed $A>0$, termwise integration gives us
\begin{equation}\label{eq:64}
\operatorname{Arg}\left(\mathscr{F}\left(x\right)\right)-\underbrace{Arg\left(\mathscr{F}\left(0\right)\right)}_{=0}=\operatorname{arctan}\left(\frac{x}{\alpha}\right)-\sum_{k=1}^{\infty}\left(\Xi_k\left(x\right)-\underbrace{\Xi_k\left(0\right)}_{=0}\right),
\end{equation}
\begin{equation}\label{eq:65}
\operatorname{Arg}\left(\mathscr{F}\left(x\right)\right)=\operatorname{arctan}\left(\frac{x}{\alpha}\right)-\sum_{k=1}^{\infty}\Xi_k\left(x\right),
\end{equation}
which is satisfied for all $x\geq 0$.

\newpage
Substitute the explicit form of $\operatorname{Arg}\left(\mathscr{F}\left(x\right)\right)$ from (\ref{eq:39}) to (\ref{eq:65}) and rearrange the terms as
\begin{equation}\label{eq:66}
\operatorname{arctan}\left(\frac{x}{\alpha}\right)=\operatorname{arctan}\left(\operatorname{cotg}\left(\alpha\right)\operatorname{tanh}\left(x\right)\right)+\sum_{k=1}^{\infty}\operatorname{arctan}\left(\frac{2\alpha x}{x^2+\left(k\pi\right)^2-\alpha^2}\right).
\end{equation}

This is exactly the key identity behind the formula in Theorem \ref{thm.2}. Now we simply turn it into the integral formula for any $A>0$ and $x\neq 0$
\begin{equation}\label{eq:67}
\int_0^A\frac{1}{x}\operatorname{arctan}\left(\frac{x}{\alpha}\right)=\int_0^A\frac{1}{x}\operatorname{arctan}\left(\operatorname{cotg}\left(\alpha\right)\operatorname{tanh}\left(x\right)\right)+\sum_{k=1}^{\infty}\int_0^A\frac{1}{x}\operatorname{arctan}\left(\frac{2\alpha x}{x^2+\left(k\pi\right)^2-\alpha^2}\right).
\end{equation}

This completes the proof of Theorem \ref{thm.2}.

\vspace{0.1cm}
To highlight the consistency of the result of Theorem \ref{thm.2}, we can easily show that each integral is well-defined, i.e. converges near $x=0$ and that the sum over all $k$’s is legitimately interchangeable with integration over $[0,A]$.

Let us first focus on the local integrability near $x=0$. The integrand on the left-hand side of (\ref{eq:67}) boils down to $\operatorname{arctan}\left(\frac{x}{\alpha}\right)\sim\frac{x}{\alpha}$ as $x\longrightarrow 0$. Hence
\begin{equation}\label{eq:68}
\frac{1}{x}\operatorname{arctan}\left(\frac{x}{\alpha}\right)\sim\frac{1}{\alpha},
\end{equation}
so the integral is bounded near zero.

The second term on the right-hand side of (\ref{eq:67}) behaves as
\begin{equation}\label{eq:69}
\operatorname{arctan}\left(\operatorname{cotg}\left(\alpha\right)\operatorname{tanh}\left(x\right)\right)\sim\operatorname{arctan}\left(\operatorname{cotg}\left(\alpha\right)x\right)\sim\operatorname{cotg}\left(\alpha\right)x,
\end{equation}
since $\operatorname{tanh}\left(x\right)\sim x$ as $x\longrightarrow 0$. Thus
\begin{equation}\label{eq:70}
\frac{1}{x}\operatorname{arctan}\left(\operatorname{cotg}\left(\alpha\right)\operatorname{tanh}\left(x\right)\right)\longrightarrow \operatorname{cotg\left(\alpha\right)}
\end{equation}
with $x\longrightarrow 0$, so it also converges.

For fixed $k$, the summand in (\ref{eq:67}) can be approximated in the following way
\begin{equation}\label{eq:71}
\operatorname{arctan}\left(\frac{2\alpha x}{x^2+\left(k\pi\right)^2-\alpha^2}\right)\sim\frac{2\alpha x}{\left(k\pi\right)^2-\alpha^2}
\end{equation}
as $x\longrightarrow 0$, so
\begin{equation}\label{eq:72}
\frac{1}{x}\operatorname{arctan}\left(\frac{2\alpha x}{x^2+\left(k\pi\right)^2-\alpha^2}\right)\sim\frac{2\alpha}{\left(k\pi\right)^2-\alpha^2}.
\end{equation}
This is finite because $\alpha\neq k\pi$, with $k\in\mathbb{Z}$. Hence, all of the integrals within the formula in Theorem \ref{thm.2} are well-defined. Additionally, the interchange of the summation and integration within the second term of (\ref{eq:67}) can be justified via the Weierstrass criterion, forming a test bound on $[0,A]$. 

\newpage
\section{Proofs of Corollaries \ref{cor.2}, \ref{cor.3}, \ref{cor.4} and Remark \ref{rem.1}}

Let us begin with the proof of Corollary \ref{cor.2}.

\begin{proof}[Proof of Corollary~\ref{cor.2}]\normalfont
Recalling the result of Theorem \ref{thm.2}, we know that
\begin{equation}
\operatorname{Ti}_2\left(\frac{A}{\alpha}\right)=\int_0^A\frac{1}{x}\operatorname{arctan}\left(\operatorname{cotg}\left(\alpha\right)\operatorname{tanh}\left(x\right)\right)+\sum_{k=1}^{\infty}\int_0^A\frac{1}{x}\operatorname{arctan}\left(\frac{2\alpha x}{x^2+\left(k\pi\right)^2-\alpha^2}\right).
\end{equation}
It remains to simplify each integral term for every k. Hence let us fix $k\geq 1$. Since $0<\alpha<\pi$, we have $k\pi-\alpha>0$ and $k\pi+\alpha>0$. So for every $x\geq 0$, one can define the following functions
\begin{equation}
u_k\left(x\right):=\operatorname{arctan}\left(\frac{x}{k\pi-\alpha}\right)\in [0,\frac{\pi}{2}),\hspace{0.2cm} v_k\left(x\right):=\operatorname{arctan}\left(\frac{x}{k\pi+\alpha}\right)\in[0,\frac{\pi}{2}).
\end{equation}
Additionally,
\begin{equation}
\frac{x}{k\pi-\alpha}\geq\frac{x}{k\pi+\alpha},
\end{equation}
thus
\begin{equation}
0\leq u_k\left(x\right)-v_k\left(x\right)<\frac{\pi}{2}.
\end{equation}
Therefore, the principal branch of the function $\operatorname{arctan}$ is compatible with the tangent subtraction formula in the following way
\begin{equation}\label{eq.115}
u_k\left(x\right)-v_k\left(x\right)=\operatorname{arctan}\left(\frac{\frac{x}{k\pi-\alpha}-\frac{x}{k\pi+\alpha}}{1+\frac{x^2}{\left(k\pi-\alpha\right)\left(k\pi+\alpha\right)}}\right).
\end{equation}
The argument of (\ref{eq.115}) can be simplified as
\begin{equation}
\frac{\frac{x}{k\pi-\alpha}-\frac{x}{k\pi+\alpha}}{1+\frac{x^2}{\left(k\pi-\alpha\right)\left(k\pi+\alpha\right)}}=\frac{x^2}{x^2+\left(k\pi\right)^2-\alpha^2}.
\end{equation}
Hence
\begin{equation}\label{eq.117}
\operatorname{arctan}\left(\frac{x^2}{x^2+\left(k\pi\right)^2-\alpha^2}\right)=\operatorname{arctan}\left(\frac{x}{k\pi-\alpha}\right)-\operatorname{arctan}\left(\frac{x}{k\pi+\alpha}\right).
\end{equation}
Dividing (\ref{eq.117}) by $x$ and integrating from $0$ to $A$ yields
\begin{equation}
\int_0^A\operatorname{arctan}\left(\frac{x^2}{x^2+\left(k\pi\right)^2-\alpha^2}\right)\operatorname{dx}=\int_0^A\frac{1}{x}\operatorname{arctan}\left(\frac{x}{k\pi-\alpha}\right)\operatorname{dx}-\int_0^A\operatorname{arctan}\left(\frac{x}{k\pi+\alpha}\right)\operatorname{dx}.
\end{equation}
By making a substitution $x=u\left(k\pi\mp\alpha\right)$, both integrals correspond to $\operatorname{Ti}_2\left(\frac{A}{k\pi-\alpha}\right)$ and $\operatorname{Ti}_2\left(\frac{A}{k\pi+\alpha}\right)$. 
Substituting this back to the formula in Theorem \ref{thm.2} completes the proof of Corollary \ref{cor.2}.
\end{proof}

\newpage
Let us now proceed to the proof of Corollary \ref{cor.3} which is a direct consequence of Corollary \ref{cor.2}.

\begin{proof}[Proof of Corollary~\ref{cor.3}]\normalfont
Put $A=\alpha=\frac{\pi}{n}$ with $n\geq 2$. Then
\begin{equation}
\frac{A}{\alpha}=1,
\end{equation}
so the left-hand side of (\ref{eq.7}) is precisely $\operatorname{Ti}_2\left(1\right)=G$. Furthermore,
\begin{equation}
\frac{A}{k\pi-\alpha}=\frac{\frac{\pi}{n}}{k\pi-\frac{\pi}{n}}=\frac{1}{nk-1},\hspace{0.2cm}\frac{A}{k\pi+\alpha}=\frac{\frac{\pi}{n}}{k\pi+\frac{\pi}{n}}=\frac{1}{nk+1}.
\end{equation}
Substituting this into Corollary \ref{cor.2} yields the desired result.
\end{proof}

Notice that the first few values of $n$ are especially nice
\begin{itemize}
    \item $n=2$:
    \begin{equation}
    G=\sum_{k=1}^{\infty}\Bigg[\operatorname{Ti}_2\left(\frac{1}{2k-1}\right)-\operatorname{Ti}_2\left(\frac{1}{2k+1} \right) \Bigg],
    \end{equation}
\end{itemize}
since the principal hyperbolic term vanishes. This is the formula in Remark \ref{rem.1}.

In summary, rational angle $\alpha=\frac{\pi}{n}$ generates a Catalan decomposition, i.e. different integro-summation representations of Catalan’s constant G, but only the value $\alpha=\frac{\pi}{2}$ is unique, because it eliminates the principal hyperbolic term. 

Now we shall proceed to the proof of Corollary \ref{cor.4}.

\begin{proof}[Proof of Corollary~\ref{cor.4}]\normalfont
Let us start from the defining inverse tangent integral and substitute $\operatorname{tan}\left(\theta\right)$ as an argument. Then
\begin{equation}
\operatorname{Ti}_2\left(\operatorname{tan}\left(\theta\right)\right)=\int_0^{\operatorname{tan}\left(\theta\right)}\frac{\operatorname{arctan}\left(x\right)}{x}\operatorname{dx}.
\end{equation}
Now, make a substitution $x=\operatorname{tan}\left
(u\right)$, where $u\in[0,\theta]$. Hence $\operatorname{dx}=\operatorname{sec}^2\left(u\right)\operatorname{du}$ and $\operatorname{arctan}\left(x\right)=u$. This gives us
\begin{equation}
\operatorname{Ti}_2\left(\operatorname{tan}\left(\theta\right)\right)=\int_0^{\theta}\frac{u}{\operatorname{tan}\left(u\right)}\operatorname{sec}^2\left(u\right)\operatorname{du}=\int_0^{\theta}\frac{u}{\operatorname{sin}\left(u\right)\operatorname{cos}\left(u\right)}\operatorname{du}.
\end{equation}
Notice that it is possible to utilize the logarithmic derivative in the following way
\begin{equation}
\frac{d}{du}\operatorname{ln}\left(\operatorname{tan}\left(u\right)\right)=\frac{\operatorname{sec}^2\left(u\right)}{\operatorname{tan}\left(u\right)}=\frac{1}{\operatorname{sin}\left(u\right)\operatorname{cos}\left(u\right)},
\end{equation}
thus
\begin{equation}\label{eq.125}
\operatorname{Ti}_2\left(\operatorname{tan}\left(u\right)\right)=\int_0^{\theta}u\operatorname{d}\left(\operatorname{ln}\left(\operatorname{tan}\left(u\right)\right)\right)=\theta\operatorname{ln}\left(\operatorname{tan}\left(\theta\right)\right)-\int_0^{\theta}\operatorname{ln}\left(\operatorname{tan}\left(u\right)\right)\operatorname{du},
\end{equation}
where the second equality comes from the integration by parts. 

Next, let us write
\begin{equation}
\operatorname{ln}\left(\operatorname{tan}\left(u\right)\right)=\operatorname{ln}\left(2\operatorname{sin}\left(u\right)\right)-\operatorname{ln}\left(2\operatorname{cos}\left (u\right)\right),
\end{equation}
hence 
\begin{equation}
\int_0^{\theta}\operatorname{ln}\left(\operatorname{tan}\left(u\right)\right)\operatorname{du}=\int_0^{\theta}\operatorname{ln}\left(2\operatorname{sin}\left(u\right)\right)\operatorname{du}-\int_0^{\theta}\operatorname{ln}\left(2\operatorname{cos}\left(u\right)\right)\operatorname{du}.
\end{equation}
Now we will compute both integrals separately. 

The definition of the Clausen’s function with $t=2u$ gives us
\begin{equation}
\operatorname{Cl}_2\left(2\theta\right)=-\int_0^{2\theta}\operatorname{ln}\left(2\operatorname{sin}\left(\frac{t}{2}\right)\right)\operatorname{dt}=-2\int_0^{\theta}\operatorname{ln}\left(2\operatorname{sin}\left(u\right)\right)\operatorname{du},
\end{equation}
so
\begin{equation}
\int_0^{\theta}\operatorname{ln}\left(2\operatorname{sin}\left(u\right)\right)\operatorname{du}=-\frac{1}{2}\operatorname{Cl}_2\left(2\theta\right).
\end{equation}
Again, by the definition of the Clausen’s function but with $t=\pi-2u$, we get
\begin{equation}
\operatorname{Cl}_2\left(\pi-2\theta\right)=-\int_0^{\pi-2\theta}\operatorname{ln}\left(2\operatorname{sin}\left(\frac{t}{2}\right)\right)\operatorname{dt}.
\end{equation}
Since
\begin{equation}
\operatorname{sin}\left(\frac{\pi-2u}{2}\right)=\operatorname{sin}\left(\frac{\pi}{2}-u\right)=\operatorname{cos}\left(u\right),
\end{equation}
we obtain the following expression
\begin{equation}
\int_0^{\theta}\operatorname{ln}\left(2\operatorname{cos}\left(u\right)\right)\operatorname{du}=\frac{1}{2}\operatorname{Cl}_2\left(\pi-2\theta\right).
\end{equation}
Therefore
\begin{equation}
\int_0^{\theta}\operatorname{ln}\left(\operatorname{tan}\left(u\right)\right)\operatorname{du}=-\frac{1}{2}\operatorname{Cl}_2\left(2\theta\right)-\frac{1}{2}\operatorname{Cl}_2\left(\pi-2\theta\right).
\end{equation}
Substituting this back into (\ref{eq.125}) yields the desired formula
\begin{equation}\label{eq.135}
\operatorname{Ti}_2\left(\operatorname{tan}\left(\theta\right)\right)=\theta\operatorname{ln}\left(\operatorname{tan}\left(\theta\right)\right)+\frac{1}{2}\operatorname{Cl}_2\left(2\theta\right)+\frac{1}{2}\operatorname{Cl}_2\left(\pi-2\theta\right).
\end{equation}

This completes the proof of Corollary \ref{cor.4}.
\end{proof}

After a quick observation, one can see that there are several further interesting values of $\theta$ and $A$ that produce elegant simplifications of \ref{eq.135}. For example, it is worth trying the following pairs: $\theta=\frac{\pi}{6}$ with $A=\frac{\alpha}{\sqrt{3}}$, $\theta=\frac{\pi}{3}$ with $A=\alpha\sqrt{3}$, $\theta=\frac{\pi}{8}$ with $A=\alpha\left(\sqrt{2}-1\right)$, or $\theta=\frac{\pi}{12}$ with $A=\alpha\left(2-\sqrt{3}\right)$.

\section{Proof of Lemma \ref{lem.1} - Representation of the Catalan’s constant}

In this section, we will focus on the simplification of the result of Theorem \ref{thm.2} which occurs at $A=1$. Each $k$-summand of the integral of the second term in (\ref{eq:67}) can be evaluated explicitly in terms of the inverse tangent integral $\operatorname{Ti}_2$, equivalently in terms of dilogarithms. Then the whole series can be directly expanded via the Hurwitz zeta function and the Gamma function.  

\begin{proof}[Proof of Lemma~\ref{lem.1}]\normalfont

From the decomposition formula obtained earlier in the paper, specialized to $\alpha=1$ and $A=1$, we have
\begin{equation}\label{eq:G-decomposition-start}
G
=
\int_0^1 \frac{\arctan(\cot(1)\tanh x)}{x}\,dx
+
\sum_{k=1}^{\infty}
\left[
\operatorname{Ti}_2\!\left(\frac{1}{k\pi-1}\right)
-
\operatorname{Ti}_2\!\left(\frac{1}{k\pi+1}\right)
\right].
\end{equation}

Now, let us denote the first term in (\ref{eq:G-decomposition-start}) as $K\left(1\right)$. Then we get

\begin{equation}\label{eq:G-K1-series}
G
=
K(1)
+
\sum_{k=1}^{\infty}
\left[
\operatorname{Ti}_2\!\left(\frac{1}{k\pi-1}\right)
-
\operatorname{Ti}_2\!\left(\frac{1}{k\pi+1}\right)
\right].
\end{equation}

In order to make the representation more efficient, it is desirable to expand the left-hand side of (\ref{eq:G-K1-series}) into an infinite series, which subsequently leads to the so-called Hurwitz zeta function. In general, the behavior of such a class of functions is well described.

Let us recall that for $|v|\le 1$,
\begin{equation}\label{eq:Ti2-power}
\operatorname{Ti}_2(v)=\sum_{n=0}^{\infty}\frac{(-1)^n}{(2n+1)^2}v^{2n+1}.
\end{equation}
For every integer $k\ge 1$, both numbers $(k\pi+1)^{-1}$ and $(k\pi-1)^{-1}$ are contained in the interval $(0,1)$, hence \eqref{eq:Ti2-power} may be applied to each term of the series in \eqref{eq:G-K1-series}. Therefore,
\[
\operatorname{Ti}_2\!\left(\frac{1}{k\pi\pm 1}\right)
=
\sum_{n=0}^{\infty}
\frac{(-1)^n}{(2n+1)^2}\frac{1}{(k\pi\pm 1)^{2n+1}}.
\]
Substituting this into \eqref{eq:G-K1-series}, we obtain
\begin{equation}\label{eq:G-double-series}
G
=
K(1)
+
\sum_{k=1}^{\infty}
\sum_{n=0}^{\infty}
\frac{(-1)^n}{(2n+1)^2}
\left[
\frac{1}{(k\pi+1)^{2n+1}}
-
\frac{1}{(k\pi-1)^{2n+1}}
\right].
\end{equation}

We now justify the interchange of the two summations. For fixed $n\ge 0$, the $k$-series is absolutely convergent since its summand is asymptotically $O(k^{-(2n+1)})$. For $n\ge 1$, the exponent satisfies $2n+1\ge 3$, and therefore the sum over $k$ is absolutely convergent with uniform and summable majorant. 

The remaining case $n=0$ must be handled separately, but even there, the difference
\[
\frac{1}{k\pi+1}-\frac{1}{k\pi-1}
=
-\frac{2}{(k\pi)^2-1}
\]
is asymptotically $O(k^{-2})$. Hence, it is summable over $k$. This implies that the double series is absolutely summable after taking the difference, and Fubini's theorem allows us to interchange the order of summation. Thus
\begin{equation}\label{eq:G-series-reordered}
G
=
K(1)
+
\sum_{n=0}^{\infty}\frac{(-1)^n}{(2n+1)^2}S_{2n+1},
\end{equation}
where
\begin{equation}\label{eq:Sr-def}
S_r
:=
\sum_{k=1}^{\infty}
\left(
\frac{1}{(k\pi-1)^r}
-
\frac{1}{(k\pi+1)^r}
\right).
\end{equation}

At this point, we have to evaluate the coefficient $S_r$. First, we consider the case when $r=1$. Then
\begin{equation}\label{eq:S1-expand}
S_1
=
\sum_{k=1}^{\infty}
\left(
\frac{1}{k\pi-1}
-
\frac{1}{k\pi+1}
\right)
=
\sum_{k=1}^{\infty}
\frac{2}{(k\pi)^2-1}.
\end{equation}
Now it is desirable to use the classical partial fraction identity
\begin{equation}\label{eq:cot-partial-fraction}
\sum_{k=1}^{\infty}\frac{1}{(k\pi)^2-b^2}
=
\frac{1}{2b^2}
-
\frac{\cot \left(b\right)}{2b},
\qquad b\notin \pi\mathbb{Z}.
\end{equation}
Setting up $b=1$ gives us
\[
\sum_{k=1}^{\infty}\frac{1}{(k\pi)^2-1}
=
\frac12-\frac{\cot \left(1\right)}{2},
\]
so
\begin{equation}\label{eq:S1-value}
S_1
=
2\left(\frac12-\frac{\cot \left(1\right)}{2}\right)
=1-\cot \left(1\right)
\end{equation}

Now let $r>1$. Then we have
\begin{align}
\sum_{k=1}^{\infty}\frac{1}{(k\pi+1)^r}
&=
\pi^{-r}\sum_{k=1}^{\infty}\frac{1}{\left(k+\frac{1}{\pi}\right)^r}
=
\pi^{-r}\zeta\!\left(r,1+\frac{1}{\pi}\right),
\label{eq:hurwitz-plus}
\\
\sum_{k=1}^{\infty}\frac{1}{(k\pi-1)^r}
&=
\pi^{-r}\sum_{k=1}^{\infty}\frac{1}{\left(k-\frac{1}{\pi}\right)^r}
=
\pi^{-r}\zeta\!\left(r,1-\frac{1}{\pi}\right).
\label{eq:hurwitz-minus}
\end{align}
Subtracting \eqref{eq:hurwitz-minus} from \eqref{eq:hurwitz-plus} gives us
\begin{equation}\label{eq:Sr-hurwitz}
S_r
=
\pi^{-r}
\left[
\zeta\!\left(r,1-\frac{1}{\pi}\right)
-
\zeta\!\left(r,1+\frac{1}{\pi}\right)
\right],
\qquad r>1.
\end{equation}

Directly substituting \eqref{eq:S1-value} and \eqref{eq:Sr-hurwitz} into \eqref{eq:G-series-reordered}, and separating the $n=0$ term yields
\begin{align}
G
&=
K(1)
+
(1-\cot \left(1\right))
+
\sum_{n=1}^{\infty}
\frac{(-1)^n}{(2n+1)^2}\,
\pi^{-(2n+1)}
\left[
\zeta\!\left(2n+1,1+\frac{1}{\pi}\right)
-
\zeta\!\left(2n+1,1-\frac{1}{\pi}\right)
\right].
\label{eq:G-final-hurwitz}
\end{align}
This establishes the anticipated formula \eqref{eq:L1-main}.

\vspace{0.2cm}
In what follows, we will derive the Fourier-type expansion formula for the integrand inside $K(1)$. This will eventually complete the whole proof. 

We can directly evaluate the auxiliary integral $K(1)$. For general $\alpha$ with $0<\alpha<\pi$, consider the term $\arctan(\cot\alpha \tanh x)$, where $x>0$.
Using the standard complex logarithm representation of the inverse tangent as
\begin{equation}\label{eq:atan-log}
\arctan \left(y\right)
=
\frac{1}{2i}\log\!\left(\frac{1+iy}{1-iy}\right),
\end{equation}
we obtain the following identity
\begin{equation}\label{eq:atan-general-alpha}
\arctan(\cot\left(\alpha\right)\,\tanh \left(x\right))
=
\frac{1}{2i}
\log\!\left(
\frac{1+i\cot\left(\alpha\right)\,\tanh \left
(x\right)}{1-i\cot\left
(\alpha\right)\tanh \left(x\right)
}\right).
\end{equation}

Now, set
\[
q:=e^{-2x},
\qquad 0<q<1\],
where $x>0$. Since
\[
\tanh x=\frac{1-q}{1+q},
\]
we may rewrite the quotient in \eqref{eq:atan-general-alpha} as
\[
\frac{1+i\cot\left(\alpha\right)\tanh \left(x\right)}{1-i\cot\left(\alpha\right)\tanh \left
(x\right)}
=
\frac{(1+q)+i\cot\left(\alpha\right)(1-q)}{(1+q)-i\cot\left(\alpha\right)(1-q)}.
\]
Multiplying the numerator and the denominator by $\sin\alpha$, and using Euler’s formula $\cos\alpha+i\sin\left(\alpha\right)=e^{i\alpha}$, gives us the following simplification
\begin{equation}\label{eq:quotient-simplified}
\frac{1+i\cot\left(\alpha\right)\tanh \left(x\right)}{1-i\cot\left(\alpha\right)\tanh \left(x\right)}
=
e^{\,i(\pi-2\alpha)}
\frac{1-qe^{2i\alpha}}{1-qe^{-2i\alpha}}.
\end{equation}
Substituting this back into (\ref{eq:atan-general-alpha}) yields
\begin{align}
\arctan\left(\cot\left(\alpha\right)\tanh \left(x\right)\right)
&=
\frac{1}{2i}\log\!\left(
e^{\,i(\pi-2\alpha)}
\frac{1-qe^{2i\alpha}}{1-qe^{-2i\alpha}}
\right)
\nonumber\\
&=
\frac{\pi}{2}-\alpha
+
\frac{1}{2i}
\left[
\log(1-qe^{2i\alpha})-\log(1-qe^{-2i\alpha})
\right].
\label{eq:atan-alpha-expanded}
\end{align}

Because $|q|<1$, the power series fro logarithm
\[
\log(1-z)=-\sum_{j=1}^{\infty}\frac{z^j}{j},
\qquad |z|<1,
\]
may be applied to both logarithms in the relation \eqref{eq:atan-alpha-expanded}. This gives us
\begin{align}
\arctan\left(\cot\left(\alpha\right)\tanh \left(x\right)\right)
&=
\frac{\pi}{2}-\alpha
+
\frac{1}{2i}
\left[
-\sum_{j=1}^{\infty}\frac{q^j e^{2ij\alpha}}{j}
+
\sum_{j=1}^{\infty}\frac{q^j e^{-2ij\alpha}}{j}
\right]
\nonumber\\
&=
\frac{\pi}{2}-\alpha
-
\sum_{j=1}^{\infty}\frac{\sin(2j\alpha)}{j}q^j
\nonumber\\
&=
\frac{\pi}{2}-\alpha
-
\sum_{j=1}^{\infty}\frac{\sin(2j\alpha)}{j}e^{-2jx}.
\label{eq:atan-alpha-fourier}
\end{align}

Let us now recall the classical Fourier series identity
\begin{equation}\label{eq:Fourier-sawtooth}
\sum_{j=1}^{\infty}\frac{\sin(2j\alpha)}{j}
=
\frac{\pi}{2}-\alpha,
\qquad 0<\alpha<\pi.
\end{equation}
Subtracting \eqref{eq:Fourier-sawtooth} from \eqref{eq:atan-alpha-fourier} provides us with the formula
\begin{equation}\label{eq:atan-alpha-vanishing}
\arctan\left(\cot\left(\alpha\right)\tanh \left(x\right)\right)
=
-\sum_{j=1}^{\infty}\frac{\sin(2j\alpha)}{j}\bigl(e^{-2jx}-1\bigr).
\end{equation}
This expression is especially convenient because the right-hand side vanishes at $x=0$, as it must, since the left-hand side tends to zero as $x\longrightarrow 0$.

\vspace{0.2cm}
Now we specialize the relation \eqref{eq:atan-alpha-vanishing} to $\alpha=1$ and perform termwise integration. This will eventually lead to the appearance of the exponential integral.

We have
\[
\arctan\left(\cot \left(1\right)\tanh \left(x\right)\right)
=
-\sum_{j=1}^{\infty}\frac{\sin(2j)}{j}\bigl(e^{-2jx}-1\bigr).
\]
Dividing this by $x$ and integrating from $0$ to $1$ formally gives us
\begin{equation}\label{eq:K1-formal}
K(1)
=
-\sum_{j=1}^{\infty}\frac{\sin(2j)}{j}
\int_0^1 \frac{e^{-2jx}-1}{x}\,dx.
\end{equation}

From a technical standpoint, we need to justify the interchange between the sum and the integral. For each fixed $j$, we have
\[
\left|\frac{e^{-2jx}-1}{x}\right|
\le
\min\{2j,1/x\}.
\]
Therefore
\[
\int_0^1 \left|\frac{e^{-2jx}-1}{x}\right|dx
\ll 1+\log \left(j\right).
\]

However, this still does not justify absolute summability, because after multiplying by \(\vert\operatorname{sin}\left(2j\right)\vert/2j\), the series 
\begin{equation}
\sum_{j=0}^{\infty}\frac{1+\operatorname{log}\left(j\right)}{j}
\end{equation}
diverges.

Therefore, we are forced to exploit the oscillations of \(\operatorname{sin}\left(2j\right)\). This leads us to the following proposition.
\begin{prop}\label{prop.4}
Let us define the following function
\begin{equation}
f\left(x\right):=\sum_{j=0}^{\infty}\frac{\operatorname{sin}\left(2j\right)}{2j}\left(1-e^{-2jx}\right).
\end{equation}
Next, let 
\begin{equation}
u_j\left(x\right):=\frac{1-e^{-2jx}}{j},\quad A_n:=\sum_{j=0}^{\infty}\operatorname{sin}\left(2j\right),\quad f_N\left(x\right):=\sum_{j=0}^{\infty}\operatorname{sin}\left(2j\right)u_j\left(x\right).
\end{equation}
Then \(f_N\left(x\right)\longrightarrow f\left(x\right)\) for every \(x\in(0,1]\).
\end{prop}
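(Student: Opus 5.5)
Throughout, I read the statement with the evident intended normalizations. The sums start at $j=1$, since the $j=0$ term is meaningless. The partial sums are $A_n:=\sum_{j=1}^{n}\sin(2j)$ and $f_N(x):=\frac12\sum_{j=1}^{N}\sin(2j)\,u_j(x)$. With these, $f(x)=\lim_N f_N(x)$ is exactly the series $\sum_{j\ge1}\frac{\sin(2j)}{2j}(1-e^{-2jx})$. The plan is to apply the Dirichlet test through Abel summation by parts. The terms $\sin(2j)$ do not decay, but their partial sums are bounded. The weights $u_j(x)$ will be shown to be monotone and to tend to zero.

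\textbf{Step 1: bounded partial sums.} Using the telescoping identity $2\sin(1)\sin(2j)=\cos(2j-1)-\cos(2j+1)$, we get
\begin{equation}
A_n=\frac{\cos(1)-\cos(2n+1)}{2\sin(1)}.
\end{equation}
Hence $|A_n|\le 1/\sin(1)$ for all $n$, uniformly, because $\sin(1)>0$.

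\textbf{Step 2: monotonicity and decay of the weights.} Fix $x\in(0,1]$ and write
\begin{equation}
u_j(x)=2x\,g(2jx),\qquad g(s):=\frac{1-e^{-s}}{s}.
\end{equation}
The function $g$ is strictly decreasing on $(0,\infty)$. Indeed, $s^2g'(s)=(1+s)e^{-s}-1<0$, since $e^{s}>1+s$ for $s>0$. Therefore $j\mapsto u_j(x)$ is decreasing. Moreover $0\le u_j(x)\le 1/j\to0$, and this bound is uniform in $x$.

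\textbf{Step 3: summation by parts and the Cauchy criterion.} Summation by parts gives, for $M>N$,
\begin{equation}
\sum_{j=N+1}^{M}\sin(2j)u_j=A_Mu_M-A_Nu_{N+1}+\sum_{j=N+1}^{M-1}A_j\,(u_j-u_{j+1}).
\end{equation}
Every difference $u_j-u_{j+1}$ is nonnegative by Step 2, so the last sum telescopes in absolute value. Combining this with Step 1 yields
\begin{equation}
\Bigl|\sum_{j=N+1}^{M}\sin(2j)u_j(x)\Bigr|\le \frac{2u_{N+1}(x)}{\sin(1)}\le\frac{2}{(N+1)\sin(1)}.
\end{equation}
Thus $(f_N(x))$ is Cauchy, and so $f_N(x)\to f(x)$. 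In fact the convergence is uniform on $(0,1]$, and even on $[0,1]$, with the explicit error bound $|f(x)-f_N(x)|\le 1/(N\sin(1))$.

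I expect the only real obstacle to be Step 2, the monotonicity of $u_j$ in $j$. This is the step that substitutes for the failed absolute summability noted above, and it reduces to the elementary inequality $e^{s}>1+s$. The uniform error bound obtained in Step 3 is worth recording, because it is what will later justify exchanging the sum and the integral in \eqref{eq:K1-formal}. That exchange should be carried out after a further summation by parts against the kernel $1/x$, rather than by absolute convergence.
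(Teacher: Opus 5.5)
Your proof is correct and follows the same route as the paper. Both arguments use the bound $|A_n|\le 1/\sin(1)$ on the partial sums of $\sin(2j)$, the decrease of $u_j(x)$ in $j$ obtained from $e^{y}>1+y$, and Abel summation, i.e.\ Dirichlet's test. The one difference is where you sum by parts: you do it on the tails $\sum_{j=N+1}^{M}$, which verifies the Cauchy criterion and so actually proves convergence. The paper does it on $f_N$ itself, which gives the domination bound $|f_N(x)|\le 2Mx$ needed for its later dominated-convergence step but does not by itself give convergence. Your tail estimate, together with $u_{N+1}(x)\le\min\{2x,1/(N+1)\}$, supplies both.
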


\vspace{0.1cm}
\begin{proof}[Proof of Proposition \ref{prop.4}]\normalfont
First, we shall focus on boundedness of the trigonometric partial sums. We compute \(A_n\) using the standard identity
\begin{equation}
\sum_{j=0}^{\infty}\operatorname{sin}\left(2j\right)=\frac{\operatorname{sin\left(n\right)\operatorname{sin}\left(n+1\right)}}{\operatorname{sin}\left(1\right)}.
\end{equation}
Hence 
\begin{equation}\label{eq.185}
\vert A_n\vert\leq\frac{1}{\operatorname{sin}\left(1\right)}:=M,\quad n\geq 1.
\end{equation}
Therefore, the partial sums of \(\operatorname{sin}\left(2j\right)\) are uniformly bounded by this \(M\).

Proceeding with the monotonicity of \(u_j\) in \(j\), we fix \(x\in[0,1]\) and consider the continuous extension
\begin{equation}
\gamma_x\left(t\right):=\frac{1-e^{2xt}}{t},
\end{equation}
where \(t>0\). Differentiating both sides with respect to \(t\) gives us
\begin{equation}
\gamma’_x\left(t\right)=\frac{2xte^{-2xt}-\left(1-e^{-2xt}\right)}{t^2}=\frac{e^{-2xt}\left(1+2xt\right)-1}{t^2}.
\end{equation}

Generally, for every \(y>0\), we have the following inequality
\begin{equation}
e^y>1+y\iff e^{-y}\left(1+y\right)<1.
\end{equation}

Setting \(y=2xt\), we obtain
\begin{equation}
e^{-2xt}\left(1+2xt\right)-1<0,
\end{equation}
so \(\gamma_x’\left(t\right)<0\).

Therefore, for each fixed \(x\in[0,1]\), the sequence \(u_j\left(x\right)=\gamma_x\left(j\right)\) is decreasing in \(j\). Additionally, we have
\begin{equation}\label{eq.190}
0\leq u_j\left(x\right)\leq u_1\left(j\right)=1-e^{-2x}\leq 2x
\end{equation}
and also
\begin{equation}
\sup_{x\in[0,1]}u_j\left(x\right)=\sup{x\in[0,1]}\frac{1-e^{-2jx}}{j}\leq\frac{1}{j}\longrightarrow 0.
\end{equation}

At this stage, it just remains to apply to perform the Abel summation for the partial sums. Since \(A_j=\sum_{k=1}^j\operatorname{sin}\left(2k\right)\), we get
\begin{equation}
f_N\left(x\right)=A_N u_N\left(x\right)+\sum_{j=1}^{N-1}A_j\left(u_j\left(x\right)-u_{j+1}\left(x\right)\right).
\end{equation}
Using the estimation (\ref{eq.185}) and the fact that \(u_j\left(x\right)\) decreases in \(j\), we directly obtain
\begin{equation}
\vert f_N\left(x\right)\vert\leq M u_N\left(x\right)+M\sum_{j=1}^{N-1}\left(u_j\left(x\right)-u_{j+1}\left(x\right)\right).
\end{equation}
One can observe that the sum telescopes as
\begin{equation}
\sum_{j=1}^{N-1}\left(u_j\left(x\right)-u_{j+1}\left(x\right)\right)=u_1\left(x\right)-u_N\left(x\right).
\end{equation}
Thus
\begin{equation}
\vert f_N\left(x\right)\vert\leq M u_N\left(x\right)+M\left(u_1\left(x\right)-u_N\left(x\right)\right)=M u_1\left(x\right).
\end{equation}

Using the estimation (\ref{eq.190}) provides us with
\begin{equation}
\vert f_N\left(x\right)\vert\leq M\left(1-e^{-2x}\right)\leq 2Mx.
\end{equation}
Therefore,
\begin{equation}
\Big\vert\frac{f_N\left(x\right)}{x}\Big\vert\leq 2M,
\end{equation}
for all \(0<x\leq 1\) and \(N\geq 1\). In other words, \(f_N\left(x\right)\longrightarrow f\left(x\right)\) pointwise for \(x\in(0,1]\).

This completes the proof of Proposition \ref{prop.4}.
\end{proof}

Now, applying the dominated convergence theorem to the result of Proposition \ref{prop.4} gives us
\begin{equation}
\int_0^1\frac{f\left(x\right)}{x}\operatorname{dx}=\lim_{N\longrightarrow\infty}\int_0^1\frac{f_N\left(x\right)}{x}\operatorname{dx}.
\end{equation}
Simultaneously, each \(f_N\) is a finite sum, so
\begin{equation}
\int_0^1\frac{f\left(x\right)}{x}\operatorname{dx}=\sum_{j=1}^N\operatorname{sin}\left(2j\right)\int_0^1\frac{u_j\left(x\right)}{x}\operatorname{dx}=\sum_{j=1}^N\frac{\operatorname{sin}\left(2j\right)}{j}\int_0^1\frac{1-e^{-2jx}}{x}\operatorname{dx}.
\end{equation}
Passing to the limit in \(N\) gives us
\begin{equation}\label{eq.200}
\begin{aligned}
K\left(1\right):=\int_0^1\frac{\operatorname{arctan\left(\operatorname{cotg\left(1\right)\operatorname{tanh}\left(x\right)}\right)}}{x}\operatorname{dx}=\sum_{j=1}^{\infty}\frac{\operatorname{sin}\left(2j\right)}{j}\int_0^1\frac{1-e^{-2jx}}{x}\operatorname{dx}= \\
=-\sum_{j=1}^{\infty}\frac{\operatorname{sin}\left(2j\right)}{j}\int_0^1\frac{e^{-2jx}-1}{x}\operatorname{dx}.
\end{aligned}
\end{equation}

Now, for $\xi>0$, let us define the following integral,
\begin{equation}\label{eq:T-xi-def}
T(\xi):=\int_0^1 \frac{e^{-\xi x}-1}{x}\,dx.
\end{equation}
Differentiating under the integral sign gives us
\begin{equation}\label{eq:Tprime}
T'(\xi)
=
-\int_0^1 e^{-\xi x}\,dx
=
-\frac{1-e^{-\xi}}{\xi}.
\end{equation}
On the other hand, we can write
\[
\frac{d}{d\xi}\bigl(\operatorname{Ei}(-\xi)-\gamma-\log \left(\xi\right)\bigr)
=
\frac{e^{-\xi}}{\xi}-\frac{1}{\xi}
=
-\frac{1-e^{-\xi}}{\xi},
\]
where $\gamma$ is the Euler-Mascheroni constant and $\operatorname{Ei}$ represents the Exponential integral.

Thus $T'(\xi)$ coincides with the derivative of the term $\operatorname{Ei}(-\xi)-\gamma-\log\left(\xi\right)$. Since both expressions go to $0$ as $\xi\to 0^+$, we can conclude that
\begin{equation}\label{eq:T-xi-closed}
T(\xi)=\operatorname{Ei}(-\xi)-\gamma-\log\left(\xi\right).
\end{equation}
Putting $\xi=2j$ in (\ref{eq:T-xi-def}) and applying it to \eqref{eq.200} gives us
\begin{equation}\label{eq:K1-after-Ei}
K(1)
=
-\sum_{j=1}^{\infty}\frac{\sin(2j)}{j}\bigl(\operatorname{Ei}(-2j)-\gamma-\log(2j)\bigr).
\end{equation}
Expanding the right-hand side leads us to
\begin{align}
K(1)
&=
-\sum_{j=1}^{\infty}\frac{\sin(2j)}{j}\operatorname{Ei}(-2j)
+
(\gamma+\log\left( 2\right))\sum_{j=1}^{\infty}\frac{\sin(2j)}{j}
+
\sum_{j=1}^{\infty}\frac{\sin(2j)}{j}\log \left(j\right).
\label{eq:K1-expanded}
\end{align}
Using the relation \eqref{eq:Fourier-sawtooth} with $\alpha=1$ gives us
\[
\sum_{j=1}^{\infty}\frac{\sin(2j)}{j}
=
\frac{\pi}{2}-1.
\]
Therefore
\begin{equation}\label{eq:K1-before-Kummer}
K(1)
=
-\sum_{j=1}^{\infty}\frac{\sin(2j)}{j}\operatorname{Ei}(-2j)
+
\left(\frac{\pi}{2}-1\right)(\gamma+\log \left(2\right))
+
\sum_{j=1}^{\infty}\frac{\sin(2j)}{j}\log \left(j\right),
\end{equation}

Finally, we shall evaluate the logarithmic sine series by Kummer’s formula. In other words, it remains to simplify the series

\[
\sum_{j=1}^{\infty}\frac{\sin(2j)}{j}\log \left(j\right).
\]
To do so, we apply the Kummer’s identity (Preliminaries, subsection \ref{subsec:3.3}) with $x=\frac{1}{\pi}$. Then we have $0<x<1$, and $2\pi j x = 2j$. Thus
\begin{align}
\sum_{j=1}^{\infty}\frac{\sin(2j)}{j}\log \left(j\right)
&=
\pi\log\left(\Gamma\!\left(\frac{1}{\pi}\right)\right)
-
\frac{\pi}{2}\log\!\left(\frac{\pi}{\sin \left(1\right)}\right)
-
\pi\left(\frac12-\frac{1}{\pi}\right)(\gamma+\log(2\pi))
\nonumber\\
&=
\pi\log\left(\Gamma\!\left(\frac{1}{\pi}\right)\right)
+
\left(1-\frac{\pi}{2}\right)(\gamma+\log(2\pi))
-
\frac{\pi}{2}\log\!\left(\frac{\pi}{\sin \left(1\right)}\right).
\label{eq:sine-log-Kummer}
\end{align}

Substituting \eqref{eq:sine-log-Kummer} into the relation \eqref{eq:K1-before-Kummer} yields
\begin{align}
K(1)
&=
-\sum_{j=1}^{\infty}\frac{\sin(2j)}{j}\operatorname{Ei}(-2j)
+\left(\frac{\pi}{2}-1\right)(\gamma+\log \left(2\right))
\nonumber\\
&\qquad
+\pi\log\left(\Gamma\!\left(\frac{1}{\pi}\right)\right)
+\left(1-\frac{\pi}{2}\right)(\gamma+\log(2\pi))
-\frac{\pi}{2}\log\!\left(\frac{\pi}{\sin \left(1\right)}\right),
\end{align}

Finally, we can combine the constant terms together. The coefficient of $\gamma$ is
\begin{equation}
\left(\frac{\pi}{2}-1\right)+\left(1-\frac{\pi}{2}\right)=0,
\end{equation}
so all Euler--Mascheroni terms cancel. For the logarithms of $2$, we have
\begin{equation}
\left(\frac{\pi}{2}-1\right)\log \left
(2\right)
+
\left(1-\frac{\pi}{2}\right)\log(2\pi)
=
\left(1-\frac{\pi}{2}\right)\log\left(\pi\right).
\end{equation}
Hence
\[
K(1)
=
-\sum_{j=1}^{\infty}\frac{\sin(2j)}{j}\operatorname{Ei}(-2j)
+\pi\log\left(\Gamma\!\left(\frac{1}{\pi}\right)\right)
+\left(1-\frac{\pi}{2}\right)\log\left(\pi\right)
-\frac{\pi}{2}\log\!\left(\frac{\pi}{\sin \left(1\right)}\right),
\]
which precisely corresponds to \eqref{eq:K1-final-compressed}. This completes the proof of Lemma \ref{lem.1}.
\end{proof}

\newpage
\vspace{1cm}
\hspace{-0.6cm}\textbf{Acknowledgments}

\vspace{0.1cm}
This paper was supported by the grant project Cartan supergeometries and Higher Cartan geometries No. 24-10887S, provided by the Czech Science Foundation. Additionally, there was local support from the project for Specific research in Mathematics MUNI/A/1457/2023 for doctoral students, provided by the Department of Mathematics and Statistics at Masaryk University.

I am grateful to Prof. Jan Slovák, DrSc. for valuable comments and recommended adjustments, and Dr. John M. Campbell for reviewing and endorsing this article. 

 \bibliographystyle{elsarticle-num} 
 \bibliography{cas-refs}

\end{document}